\providecommand{\U}[1]{\protect\rule{.1in}{.1in}}
\def\theenumi{\arabic{enumi}}
\def\theenumii{\alph{enumii}}
\def\p@enumii{\theenumi.}
\def\theenumiii{\arabic{enumiii}}
\def\p@enumiii{(\theenumi)(\theenumii)}
\def\p@enumiv{\p@enumiii.\theenumiii}
\def\cO{\mathcal{O}}
\theoremstyle{plain}
\newtheorem{theorem}{Theorem}[section]
\newtheorem{lemma}[theorem]{Lemma}
\newtheorem{proposition}[theorem]{Proposition}
\newtheorem{prop}[theorem]{Proposition}
\numberwithin{equation}{section}
\theoremstyle{definition}
\newtheorem{definition}[theorem]{Definition}
\newtheorem{example}[theorem]{Example}
\newtheorem{eg}[theorem]{Example}
\newtheorem{remark}[theorem]{Remark}
\newtheorem{thmab}{Theorem}
\newtheorem{corab}[thmab]{Corollary}
\DeclareMathOperator{\FI}{FI}
\DeclareMathOperator{\Hom}{Hom}
\DeclareMathOperator{\Res}{Res}
\DeclareMathOperator{\Ind}{Ind}
\DeclareMathOperator{\Sp}{Sp}
\newcommand{\Sn}{\mathfrak{S}}
\newcommand{\as}{\text{*}}
\newcommand{\R}{\mathbb{R}}
\newcommand{\Q}{\mathbb{Q}}
\newcommand{\dt}{\bullet}
\newcommand{\arXiv}[1]{\href{http://arxiv.org/abs/#1}{\nolinkurl{arXiv:#1}}}
\newcommand{\arXivV}[2]{\href{http://arxiv.org/abs/#1}{\nolinkurl{arXiv:#1v#2}}}
\newcommand{\into}{\hookrightarrow}
\title{$\FI$--sets with relations}
\author[E.~Ramos]{Eric Ramos}
\address[E. Ramos]{University of Michigan Department of Mathematics, 530 Church St., Ann Arbor, MI 48109}
\email{egramos@umich.edu}
\author[D.~E.~Speyer]{David Speyer}
\address[D. Speyer]{University of Michigan Department of Mathematics, 530 Church St., Ann Arbor, MI 48109}
\email{speyer@umich.edu}
\author[G.~White]{Graham White}
\address[G. White]{Indiana University - Bloomington Department of Mathematics, Rawles Hall, Bloomington, IN 47405}
\email{grrwhite@iu.edu}
\thanks{The first author was supported by NSF grant DMS-1704811. The second author was supported by NSF grant DMS-1600223.}
\begin{document}

\begin{abstract}
Let $\FI$ denote the category whose objects are the sets $[n] = \{1,\ldots, n\}$, and whose morphisms are injections. 
We study functors from the category $\FI$ into the category of sets. We write $\Sn_n$ for the symmetric group on $[n]$.
Our first main result is that, if the functor  $[n] \mapsto X_n$ is ``finitely generated" there there is a finite sequence of integers $m_i$ and a finite sequence of subgroups $H_i$ of $\Sn_{m_i}$ such that, for $n$ sufficiently large, $X_n \cong \bigsqcup_i \Sn_n/(H_i \times \Sn_{n-m_i})$ as a set with $\Sn_n$ action. Our second main result is that, if $[n] \mapsto X_n$ and $[n] \mapsto Y_n$ are two such finitely generated functors and $R_n \subset X_n \times Y_n$ is an $\FI$--invariant family of relations, then the $(0,1)$ matrices encoding the relation $R_n$, when written in an appropriate basis, vary polynomially with $n$. In particular, if $R_n$ is an $\FI$--invariant family of relations from $X_n$ to itself, then the eigenvalues of this matrix are algebraic functions of $n$. As an application of this theorem we provide a proof of a result about eigenvalues of adjacency matrices claimed by the first and last author. This result recovers, for instance, that the adjacency matrices of the Kneser graphs have eigenvalues which are algebraic functions of $n$, while also expanding this result to a larger family of graphs.
\end{abstract}

\keywords{FI-modules, Representation Stability}

\maketitle

\section{Introduction}

We begin with a specific example of the sort of phenomenon we seek to explain. 
The Kneser graph $KG(n,r)$ has as vertices the $r$-element subsets of $n$ and has an edge between two vertices if and only if the corresponding subsets are disjoint.
Its adjacency matrix is computed in~\cite[Section 9.4]{GR} to have eigenvalues
\[
\lambda_i := (-1)^i\binom{n-r-i}{r-i} \ \mbox{for}\ 0 \leq i \leq r.
\]
 Moreover, each eigenvalue $\lambda_i$ appears with multiplicity 
\[
\binom{n}{i} - \binom{n}{i-1}.
\]
Therefore for each fixed $r$, we observe the following phenomena
\begin{itemize}
\item The total number of distinct eigenvalues is eventually independent of $n$. Specifically, there are eventually exactly $r+1$ such eigenvalues.
\item The eigenvalues each agree with a function which is algebraic over the field $\Q(n)$. Specifically, these are the functions $(-1)^i\binom{n-r-i}{r-i}$ for $0 \leq i \leq r$.
\item The multiplicity of each eigenvalue agrees with a polynomial in $n$. Specifically, these are the polynomials $\binom{n}{i} - \binom{n}{i-1}$.
\end{itemize}
Similar phenomena can be observed in the spectra of the adjacency matrices of Johnson graphs \cite{BCN}, as well as a variety of other examples (see Section \ref{examples}). The main goal of this paper is to provide a uniform framework under which one can deduce the existence of these behaviors. We achieve this using the techniques of representation stability theory and related fields, as appearing in the works of Church, Ellenberg, Farb, Nagpal, Putman, Sam, Snowden, and many others \cite{CEF,CEFN,CF,P,SS}.

Let $\FI$ denote the category whose objects are the sets $[n] = \{1,\ldots, n\}$, and whose morphisms are injections. For any commutative ring $k$, an \emph{$\FI$--module} is a functor from $\FI$ to the category of $k$--modules. \textbf{In this paper, $k$ will be a field of characteristic zero, which is henceforth fixed.} $\FI$--modules were introduced by Church, Ellenberg, and Farb as a single unifying framework for a large collection of seemingly unrelated phenomena from topology, representation theory, and a variety of other subjects \cite{CEF}.

There has been a recent push in the literature to apply the theory of $\FI$--modules to more traditionally combinatorial fields. In \cite{G}, Gadish introduces a theory of $\FI$--posets, functors from $\FI$ to the category of posets. He then applied this framework to prove non-trivial facts about linear subspace arrangements. In \cite{RW}, the first and third authors consider $\FI$--graphs, functors from $\FI$ to the category of graphs. It is proven in that work that such families of graphs display a variety of asymptotic regularities in their enumerative, topological, and algebraic properties.

What is notable about the works mentioned in the previous paragraph is that they follow a common theme: both $\FI$--posets and $\FI$--graphs can be thought of as a pair of an $\FI$--set with a relation. An \emph{$\FI$--set} is a functor $X_\dt$ from $\FI$ to the category of finite sets. A \emph{relation} between $\FI$--sets $X_\dt$ and $Y_\dt$ is any $\FI$--subset of the product $X_\dt \times Y_\dt$. In the case of graphs this relation is the edge relation, while in the case of posets it is the partial ordering. Note that in both of these examples one has $X_\dt = Y_\dt$. Indeed, the applications of our main theorems will mainly focus on this case (see Corollary \ref{maincor}). As a particular application, we provide a proof of Theorem H from \cite{RW} (see Theorem \ref{rwthm}). All this being said, our main theorems will be proven in the context of general $\FI$--set relations.

\begin{eg}
Let $Z_{\dt}$ be the $\FI$--set where $Z_n = [n]$, with the obvious maps. We can linearize this to give an $\FI$--module $k Z_{\dt}$ where $(k Z_{\dt})_n = k^n$. We write $\{ e_i \}_{i \in [n]}$ for the usual basis of $(k Z_{\dt})_n$. As an $S_n$--module, $k^n = \Sp(n) \oplus \Sp(n-1, 1)$, where $\Sp(\lambda)$ is the Specht module. The regularity of this isotypic decomposition as $n$ grows is an example of what is known as representation stability. 

We have a relation on $Z_{\dt}$ given by $\{ (i,j) \in [n]^2 : i \neq j \}$ in degree $n$. We can linearize this relation to give maps $e_i \mapsto \sum_{j \in [n],\ j \neq i} e_j$ from $k Z_n \to k Z_n$. These maps do \textbf{not} give a map of $\FI$--modules so the existing theory of $\FI$--modules does not let us study them. However, for each $n$, this map is a map of symmetric group representations so, by Schur's lemma, it acts by scalars on $\Sp(n)$ and $\Sp(n-1,1)$. Explicitly, these scalars are $n-1$ on $\Sp(n)$ and $-1$ on $\Sp(n-1,1)$. We want to prove that this sort of simple algebraic dependence on $n$ is what happens in general. 

Our Main Theorem \ref{relationthm}, roughly stated, says that for any finitely generated $\FI$--sets $X_{\dt}$ and $Y_{\dt}$ and any relation $R$ between them, the corresponding linear map $k X_n \to k Y_n$ is given by a matrix whose entries depend polynomially on $n$. One of our first goals is therefore to explain in what sense a family of maps $k X_n \to k X_n$, between different vector spaces of different sizes, can be given by a fixed matrix. We achieve this in Section \ref{keydiagram}.
\end{eg}

Before we study relations, we first explore structural properties of general $\FI$--sets. For an $\FI$--set $X_\dt$ we write $X_n$ to denote its evaluation at $n$, while we use \emph{transition map} to  mean one of the maps $X_m \to X_n$ induced by the $\FI$--structure.
We say that an $\FI$--set $X_\dt$ is \emph{finitely generated in degree $\leq d$} if, for all $n \geq d$, the elements of $X_{n+1}$ are all in the image of some transition map from $X_d$. 
We will say that an $\FI$--set is \emph{finitely generated} if it is finitely generated in some degree. 
 Our first goal will be to prove the following structure theorem for $\FI$--sets. For the remainder of this paper we write $\Sn_n$ for the symmetric group on $n$ letters.

\begin{thmab}\label{structurethm}
Let $X_\dt$ denote an $\FI$--set finitely generated in degree $\leq d$. Then there exists a finite collection of integers $m_i \leq d$, and subgroups $H_i \subseteq \Sn_{m_i}$, such that, for $n$ sufficiently large, we have an isomorphism 
\[
X_n \cong \bigsqcup_i \Sn_n / (H_i \times \Sn_{n-m_i})
\]
as sets with an action of $\Sn_n$.
\end{thmab}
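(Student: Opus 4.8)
\emph{Sketch of approach.} The plan is to decompose each $X_n$ into $\Sn_n$--orbits and prove that both the number of orbits and the isomorphism type of each individual orbit stabilize as $n\to\infty$. For the count: when $n\geq d$ every element of $X_n$ has the form $f_*(y)$ for some injection $f\colon[d]\to[n]$ and some $y\in X_d$, and since $\Sn_n$ acts transitively on such injections, the orbit $\Sn_n\cdot f_*(y)$ depends only on $y$. Thus $y\mapsto\Sn_n\cdot f_*(y)$ is a surjection $X_d\twoheadrightarrow X_n/\Sn_n$, so $X_n$ has at most $|X_d|$ orbits. Pushing orbits forward along the standard inclusion $[n]\hookrightarrow[n+1]$ (first moving a generating injection off of $n+1$ by an element of $\Sn_{n+1}$) gives a well-defined surjection $X_n/\Sn_n\twoheadrightarrow X_{n+1}/\Sn_{n+1}$, so $|X_n/\Sn_n|$ is non-increasing, hence eventually constant; past that point these maps are bijections, and they organize the orbits of the $X_n$ into finitely many ``threads'' $\{O_{n,\alpha}\}$.

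Next I would attach to a fixed thread a \emph{coherent generator}. The least size $s_n$ of a subset of $[n]$ supporting some element of $O_{n,\alpha}$ is non-increasing in $n$ (push an optimal support forward along $[n]\hookrightarrow[n+1]$), so it is eventually a constant $s\leq d$. Running the same compatibility/stabilization argument on the maps $X_s\to X_n/\Sn_n$ sending $y$ to the orbit of the image of $y$ under $[s]\hookrightarrow[n]$ shows that the fiber over $O_{n,\alpha}$ is the same subset of $X_s$ for all large $n$; choosing $y$ in that fiber yields elements $y_n\in O_{n,\alpha}$ that are supported on $[s]$ and satisfy $y_{n+1}=(\text{image of }y_n)$. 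Put $K_n=\mathrm{Stab}_{\Sn_n}(y_n)$, so that $\Sn_{[n]\setminus[s]}\subseteq K_n\subseteq\Sn_n$ and $O_{n,\alpha}\cong\Sn_n/K_n$.

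Finally I would pin down $K_n$. Call $\Delta\subseteq[n]$ inessential if $\Sn_\Delta\subseteq K_n$; since symmetric groups on overlapping sets generate the symmetric group on their union, the inessential sets of size $\geq2$ fall into maximal blocks, and for $n>2d$ exactly one block $B_n$ has size $>d$, namely the one containing $[n]\setminus[s]$, with $m_n:=n-|B_n|\leq s\leq d$. As $K_n$ permutes maximal blocks preserving their sizes, it fixes $B_n$ setwise, so $K_n\subseteq\Sn_{B_n}\times\Sn_{[n]\setminus B_n}$; together with $\Sn_{B_n}\subseteq K_n$ this forces $K_n=\Sn_{B_n}\times H_n$ for $H_n\subseteq\Sn_{[n]\setminus B_n}$. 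Feeding in $y_{n+1}=(\text{image of }y_n)$ one checks $K_n\subseteq K_{n+1}$ and $B_n\cup\{n+1\}\subseteq B_{n+1}$, so $m_n$ is non-increasing; once it stabilizes at $m$ one gets $B_{n+1}=B_n\cup\{n+1\}$, whence $[n]\setminus B_n$ is a fixed $m$--element set $C$ and $H_n\subseteq\Sn_C$ is non-decreasing in the fixed finite group $\Sn_C$, hence eventually a fixed $H\leq\Sn_m$. Then $O_{n,\alpha}\cong\Sn_n/(H\times\Sn_{n-m})$ for all large $n$, and assembling over the finitely many threads proves the theorem.

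The step I expect to need the most care is not the group theory, which stays elementary, but the bookkeeping of the last two paragraphs: constructing the coherent generator $y_n$ (so that a single element of $X_s$ tracks the whole thread), establishing the uniqueness of the block $B_n$, and recognizing that the integer $m_i$ attached to a thread is \emph{not} the minimal support size $s$ but is read off from the stabilizer $K_n$ --- these genuinely differ, for instance for the constant $\FI$--set whose value at $\emptyset$ is empty, where every point of $X_n$ requires a singleton support while $X_n\cong\Sn_n/\Sn_n$.
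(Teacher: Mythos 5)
Your proposal is correct and follows the same skeleton as the paper — reduce orbit by orbit, fix a ``coherent generator'' $y_n$ in place of the paper's fixed $x \in X_k$, and analyze the stabilizer $K_n$ — but the mechanism for pinning down the stabilizer is organized differently, and the difference is worth noting. The paper defines $A_n$ to be the $G_n$--orbit of the letter $n$, proves $A_n$ grows (Lemma~\ref{AGrows}), and then proves $\Sn_{A_n} \subseteq G_n$ as a separate step (Lemma~\ref{KeyLemma}, via the pigeonhole--conjugation trick with $[k+1,n]$ and $\rho([k+1,n])$). You instead define $B_n$ directly as the maximal ``inessential'' block, so $\Sn_{B_n} \subseteq K_n$ holds by construction, and the burden shifts to proving that there is a unique block of size exceeding $d$, which you dispatch with a clean counting argument for $n > 2d$. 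Once $n$ is large the two sets coincide: the paper's $A_n$ is contained in your big block by Lemma~\ref{KeyLemma}, and a size count forces equality. The core combinatorial fact — symmetric groups on overlapping sets generate the symmetric group on the union — is the same in both proofs; you just apply it at the definitional stage rather than in a key lemma, which arguably makes the logic easier to follow. The remaining bookkeeping (constructing $y_n$, the inclusion $K_n \subseteq K_{n+1}$ analogous to Lemma~\ref{StabilizerGrowth}, stabilization of $m_n$ and then of $H_n$) is essentially the paper's argument in different clothing. Your closing caveat that the integer $m$ read from the stabilizer need not equal the minimal support size $s$ is a correct and useful observation; the paper's parameter $k$ plays the role of your $s$, and the paper's $m = |B|$ can likewise be strictly smaller than $k$.
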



\begin{example} \label{TheoremAExample1}
Let $X_n$ be the set of ordered $m$-tuples of distinct elements of $[n]$, with the obvious induced maps. Then $X_n \cong \Sn_n/(\{ e \} \times \Sn_{n-m} )$ as a set with $\Sn$ action, so there is one term in the disjoint union, with $m_i=m$ and $H_i = \{ e \}$.
\end{example}

\begin{example} \label{TheoremAExample2}
Let $X_n$ be the set of unordered $m$-tuples of distinct elements of $[n]$, with the obvious induced maps. Then $X_n \cong \Sn_n/(\Sn_m \times \Sn_{n-m} )$ as a set with $\Sn$ action, so there is one term in the disjoint union, with $m_i=m$ and $H_i = \Sn_m$.
\end{example}

One can of course interpolate between Examples~\ref{TheoremAExample1} and~\ref{TheoremAExample2} by choosing intermediate subgroups of $\Sn_m$, and can also take disjoint unions of this construction for different choices of $m$. Theorem~\ref{structurethm} states that, once $n$ is sufficiently large, all finitely generated $\FI$-sets are built from these operations.

The \emph{linearization} $k X_{\dt}$ is the $\FI$--module where $(k X)_n$ is the free $k$--module on $X_n$ and the maps are defined in the obvious way. For $x \in X_n$, we'll write $e_x$ for the corresponding basis element of $k X_n$.
Let $R_{\dt}$ be a relation between $X_{\dt}$ and $Y_{\dt}$, meaning an $\FI$--subset of $X_{\dt} \times Y_{\dt}$. Then $R$ induces a sequence of maps
\[ r_n : k X_n \to k Y_n \]
by
\[ r_n(e_x) = \sum_{(x,y) \in R_n} e_y. \]
The maps $r_n$ commute with the $\Sn_n$ action, but they do \textbf{not} form a map of $\FI$--modules.


Given a positive integer $n$, a \emph{partition} of $n$ is a tuple of positive integers $\lambda = (\lambda_1,\ldots,\lambda_r)$ such that $\lambda_j \geq \lambda_{j+1}$ and $\sum_j \lambda_j = n$. The irreducible representations of $\Sn_n$ are in bijection with partitions of $n$ in a standard manner, and we write $\Sp(\lambda)$ for the irreducible representation (over $k$) corresponding to the partition $\lambda$. We recall that $\Hom_{\Sn_n}(\Sp(\lambda), \Sp(\lambda)) = k$. If $W$ is an representation of $\Sn_n$, then we write $W_{\lambda} := \Hom_{\Sn_n}(\Sp(\lambda), W)$. So $W \mapsto W_{\lambda}$ is a functor from $\Sn$--representations to vector spaces and we have a canonical isomorphism $W \cong \bigoplus_{|\lambda|=n} W_{\lambda} \otimes \Sp(\lambda)$. The summand $W_{\lambda} \otimes \Sp(\lambda)$ is called the \emph{$\lambda$--isotypic component} of $W$.
We will write $\alpha_{\lambda}$ for the injection $W_{\lambda} \otimes \Sp(\lambda) \to W$ and $\beta_{\lambda}$ for the surjection $W \to W_{\lambda} \otimes \Sp(\lambda)$.



If $\lambda$ is any partition of $m$, and $n \geq m + \lambda_1$, then we set $\lambda[n] = (n - m, \lambda_1,\ldots,\lambda_r)$.
The main result of representation stability is usually stated as follows: Let $M_{\dt}$ be a finitely generated $\FI$--module over $\Q$. Then, for each partition $\lambda$, the multiplicity $\dim (M_n)_{\lambda[n]}$ is independent of $n$ for $n$ sufficiently large \cite{CF}. 
We will want a slightly stronger statement: Roughly, for each partition $\lambda$, there is a finite dimensional vector space $(M_{\dt})_{\lambda}$ such that, for $n$ sufficiently large, we have canonical isomorphisms $(M_n)_{\lambda[n]} \cong (M_{\dt})_{\lambda}$. We will state this rigorously in Section~\ref{defineLambdaComponent}.

Let $X_\dt$ and $Y_\dt$ denote finitely generated $\FI$--sets and $k$ a characteristic zero field. 
Let $R_\dt$ denote a relation between them with associated maps $r_n:k X_n \rightarrow k Y_n$.
Since $r_n$ is a map of $\Sn$--modules, by Schur's lemma, it induces maps $r_{n, \lambda} : (k X_{n})_{\lambda[n]} \to (k Y_{n})_{\lambda[n]}$. 
Using the canonical isomorphisms $(k X_n)_{\lambda[n]} \cong (kX_{\dt})_{\lambda}$ and $(k Y_n)_{\lambda[n]} \cong (kY_{\dt})_{\lambda}$, we obtain a family of maps 
$r_{n, \lambda}: (k X_{\dt})_{\lambda} \longrightarrow (k Y_{\dt})_{\lambda}$

\begin{thmab}\label{relationthm}
In any bases for the vector spaces $(k X_{\dt})_{\lambda}$ and $(k Y_{\dt})_{\lambda}$, the entries of $r_{n,\lambda}$ depend polynomially on $n$.
\end{thmab}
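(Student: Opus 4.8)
The plan is to reduce everything to the level of the explicit "orbit" description provided by Theorem~\ref{structurethm}. First I would use Theorem~\ref{structurethm} to write, for $n$ large, $X_n \cong \bigsqcup_i \Sn_n/(H_i \times \Sn_{n-m_i})$ and $Y_n \cong \bigsqcup_j \Sn_n/(K_j \times \Sn_{n-\ell_j})$, so that $kX_n \cong \bigoplus_i \Ind_{H_i \times \Sn_{n-m_i}}^{\Sn_n} k$ and similarly for $kY_n$. Both the source and target then decompose as $\FI$-modules (or at least as sequences of $\Sn_n$-modules) into pieces of the form $\Ind_{H \times \Sn_{n-m}}^{\Sn_n} k$, and the map $r_n$ becomes a block matrix whose blocks are $\Sn_n$-equivariant maps between such induced modules. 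It therefore suffices to prove the theorem when $X_n = \Sn_n/(H \times \Sn_{n-m})$ and $Y_n = \Sn_n/(K \times \Sn_{n-\ell})$ are single orbits, and then assemble.

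For a single pair of orbits, the key point is to understand the space $\Hom_{\Sn_n}(kX_n, kY_n)$ and how an $\FI$-relation $R_n$ picks out an element of it. By Frobenius reciprocity and Mackey's formula, $\Hom_{\Sn_n}(kX_n, kY_n)$ has a basis indexed by $(H \times \Sn_{n-m}, K \times \Sn_{n-\ell})$-double cosets in $\Sn_n$; for $n$ large these double cosets stabilize to a finite set, and each one corresponds geometrically to an $\Sn_n$-orbit of pairs $(x,y) \in X_n \times Y_n$, i.e.\ to an $\FI$-orbit of the relation. The map $r_n$ is exactly the sum of the basis elements corresponding to the orbits that actually occur in $R_n$, and which orbits occur is eventually independent of $n$ (this is really the content of Theorem~\ref{structurethm} applied to $R_\dt \subseteq X_\dt \times Y_\dt$). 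So the "combinatorial data" of $r_n$ stabilizes; what varies with $n$ is only the numerical coefficients that appear when one re-expresses these double-coset basis vectors in terms of a basis adapted to the isotypic decomposition.

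The heart of the matter is then the following linear-algebra computation: fix a partition $\lambda$ of $m'$ and let $\lambda[n] = (n-m', \lambda_1, \dots)$. I want to show that for each double-coset basis element $\varphi^{(n)} \in \Hom_{\Sn_n}(kX_n, kY_n)$, the induced scalar-or-matrix $\varphi^{(n)}_{\lambda[n]} : (kX_n)_{\lambda[n]} \to (kY_n)_{\lambda[n]}$, written in the fixed bases of $(kX_\dt)_\lambda$ and $(kY_\dt)_\lambda$ coming from the promised canonical isomorphisms $(kX_n)_{\lambda[n]} \cong (kX_\dt)_\lambda$, has entries that are polynomial in $n$. The strategy is to compute $\varphi^{(n)}_{\lambda[n]}$ as a composite $\beta_{\lambda[n]} \circ \varphi^{(n)} \circ \alpha_{\lambda[n]}$ and to track the $n$-dependence explicitly. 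Concretely, the multiplicity space $(kX_n)_{\lambda[n]} = \Hom_{\Sn_n}(\Sp(\lambda[n]), \Ind_{H\times\Sn_{n-m}}^{\Sn_n} k) \cong (\Res_{H \times \Sn_{n-m}} \Sp(\lambda[n]))^{H \times \Sn_{n-m}}$, and by the branching rule $\Res^{\Sn_n}_{\Sn_m \times \Sn_{n-m}} \Sp(\lambda[n])$ decomposes into Littlewood--Richardson pieces $\Sp(\mu) \boxtimes \Sp(\nu[n])$ with $|\mu| = m$ and multiplicities $c^{\lambda[n]}_{\mu,\nu[n]}$ that are \emph{eventually constant} in $n$ (this is the standard stabilization of LR coefficients, and underlies the "canonical isomorphism" statement the paper defers to Section~\ref{defineLambdaComponent}). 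The composite $\beta \circ \varphi^{(n)} \circ \alpha$ is then an inner product / overlap integral over $\Sn_n$ of a fixed double-coset representative against matrix coefficients of $\Sp(\lambda[n])$; writing this as a sum over the relevant cosets, each term is a ratio of products like $\binom{n-m}{\text{const}}$ or $(n-m)(n-m-1)\cdots$ divided by normalization factors, all of which are polynomial in $n$ (the denominators from $\dim \Sp(\lambda[n])$-type normalizations cancel against numerators because we are computing a map between multiplicity spaces, not a trace).

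The main obstacle I anticipate is purely bookkeeping: keeping the bases of $(kX_\dt)_\lambda$ and $(kY_\dt)_\lambda$ genuinely $n$-independent while all the underlying modules $\Sp(\lambda[n])$ grow, so that the polynomiality claim is not obscured by a spurious $n$-dependent change of basis. The right way to handle this is to make the canonical isomorphism of Section~\ref{defineLambdaComponent} the \emph{definition} of the fixed basis — i.e.\ choose once and for all a basis of $(kX_\dt)_\lambda$ and let its image under $(kX_\dt)_\lambda \cong (kX_n)_{\lambda[n]}$ be the basis used for each $n$ — and then the content becomes: the transition structure of $\FI$ together with the double-coset description forces the matrix of $r_{n,\lambda}$ to be built out of finitely many "structure constants," each of which is a polynomial (indeed, a $\Z$-linear combination of falling factorials $(n)_k$) in $n$. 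Once the orbit-by-orbit computation is done and seen to be polynomial, summing over the finitely many orbits occurring in $R$ and over the finitely many blocks coming from Theorem~\ref{structurethm} preserves polynomiality, completing the proof.
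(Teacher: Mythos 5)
Your plan shares the paper's high-level structure --- reduce to single stable orbits via Theorem~\ref{structurethm}, realize $kX_n$ and $kY_n$ as induced modules, and get polynomiality from counting arguments that produce binomial coefficients --- but there is a genuine gap at the crucial computational step, and it is exactly the one you flag yourself as ``the main obstacle.'' You propose to compute $\beta_{\lambda[n]} \circ \varphi^{(n)} \circ \alpha_{\lambda[n]}$ directly in degree $n$, tracking the $n$-dependence through Littlewood--Richardson branching data and asserting that the dimension-type normalizations ``cancel against numerators because we are computing a map between multiplicity spaces.'' That assertion is precisely where the work lies and it is not resolved by your sketch: $\alpha_{\lambda[n]}$ and $\beta_{\lambda[n]}$ involve the representation $\Sp(\lambda[n])$, whose dimension grows with $n$, and there is no obvious reason a direct overlap-integral computation in degree $n$ produces a polynomial rather than a ratio of polynomials. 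Declaring ``the right way to handle this is to make the canonical isomorphism of Section~\ref{defineLambdaComponent} the definition of the fixed basis'' is correct in spirit, but it is a statement of what one wants, not a mechanism for achieving it.

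The paper's mechanism is the key diagram of Section~\ref{keydiagram} (Lemma~\ref{comdi}), which is absent from your plan. It shows that $r_{n,\lambda}$ can be computed, up to a fixed scalar, by the composite
\[
\beta_{q,\lambda} \circ (\iota_{n,q})_\ast \circ r_n \circ (\iota_{m,n})_\ast \circ \alpha_{m,\lambda},
\]
where $m$ is a \emph{fixed} small integer in the stable range and $q \geq n$ is large. The point is that $\alpha_{m,\lambda}$ is now a map in a fixed degree $m$, so one can pick once and for all a vector $x \in \Sp(\lambda[m])$ and a linear functional $\psi$ on $\Sp(\lambda[q])$ that is $\Sn_{q-m}$-equivariant and normalized against $\iota_{m,q}(x)$, and then the $(i,j)$ entry of $r_{n,\lambda}$ becomes a pairing that gathers, by $\Sn_{q-m}$-equivariance, into finitely many terms indexed by $S = T \cap [m]$. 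Each term contributes a coefficient independent of $n$ times a count $\#\{T \subseteq [n] : (K,c) \sim (T,d),\ T \cap [m] = S\}$, which the final lemma of Section~\ref{relproof2} shows is either $0$ or $\binom{n-m}{t-|S|}$ uniformly in $n$. This is where the polynomiality comes from, cleanly and without any denominators to cancel. Your Mackey/double-coset framing of $\Hom_{\Sn_n}(kX_n, kY_n)$ is a nice conceptual reorganization (the paper uses the equivalent but more concrete orbit description of $R_\dt$), but it does not by itself supply the anchoring-to-fixed-degree-$m$ device, and without it your computation does not close.
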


This theorem has a number of more concrete consequences in the case where $X_{\dt} = Y_{\dt}$.
 
\begin{corab}\label{maincor}
Let $X_\dt$ denote a finitely generated $\FI$--set, and let $R_\dt$ denote a self-relation with associated maps $r_n:k X_n \rightarrow k X_n$. Then for $n \gg 0$:
\begin{enumerate}
\item the number $N$ of distinct eigenvalues of $r_n$ is unchanging in $n$;
\item there exists a finite list of functions $f_1$, \dots, $f_N$, each algebraic over the field $\Q(n)$,   such the complete list of distinct eigenvalues of $r_n$ is given by $f_1(n)$, \dots, $f_N(n)$;
\item for each $i$, the function
\[
n \mapsto \text{the algebraic multiplicity of $f_i(n)$ as an eigenvalue of $r_n$}
\]
agrees with a polynomial in $n$.
\end{enumerate}
\end{corab}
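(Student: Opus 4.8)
The plan is to deduce Corollary~\ref{maincor} from Theorem~\ref{relationthm} by reducing the spectral questions about $r_n$ to elementary facts about the roots of a single polynomial family whose coefficients are polynomials in $n$.

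First I would set up the isotypic decomposition. Since $X_\dt$ is finitely generated, $kX_\dt$ is a finitely generated $\FI$--module (a finite generating set of $X_\dt$ linearizes to one for $kX_\dt$), so representation stability as recalled above, together with Theorem~\ref{structurethm}, provides a finite set $S$ of partitions and finite dimensional ``stable multiplicity spaces'' $(kX_\dt)_\lambda$, $\lambda\in S$, with $kX_n\cong\bigoplus_{\lambda\in S}(kX_\dt)_\lambda\otimes\Sp(\lambda[n])$ as $\Sn_n$--representations once $n\gg 0$. Because $r_n$ is $\Sn_n$--equivariant, Schur's lemma forces it to be block diagonal for this decomposition, acting on the $\lambda[n]$--isotypic part as $r_{n,\lambda}\otimes\mathrm{id}_{\Sp(\lambda[n])}$. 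The upshot is a clean bookkeeping statement: for $n\gg0$ the eigenvalues of $r_n$ are exactly those of the maps $r_{n,\lambda}$, $\lambda\in S$, and the algebraic multiplicity of a scalar $\mu$ as an eigenvalue of $r_n$ equals $\sum_{\lambda\in S}\big(\text{algebraic multiplicity of }\mu\text{ in }r_{n,\lambda}\big)\cdot\dim\Sp(\lambda[n])$.

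Next I would bring in Theorem~\ref{relationthm}: fixing bases for the $(kX_\dt)_\lambda$, each $r_{n,\lambda}$ is a square matrix of \emph{fixed} size $d_\lambda:=\dim(kX_\dt)_\lambda$ whose entries are polynomials in $n$, so its characteristic polynomial $\chi_\lambda(t)=\det(tI-r_{n,\lambda})$ is monic of degree $d_\lambda$ in $t$ with coefficients in $\QQ[n]$. Set $P(t)=\prod_{\lambda\in S}\chi_\lambda(t)\in\QQ[n][t]$, monic in $t$, and let $f_1,\dots,f_N$ be the \emph{distinct} roots of $P$ in an algebraic closure $\overline{\QQ(n)}$; each is algebraic over $\QQ(n)$, and after choosing branches of their Puiseux series at $n=\infty$ each $f_i$ is an honest algebraic function defined at every sufficiently large integer $n$, with $P(n,f_i(n))=0$ there. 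The remaining point is that the \emph{generic} root structure of $P$ persists at all large integer specializations. If $i\neq j$ then $f_i-f_j$ is a nonzero element of $\overline{\QQ(n)}$, its minimal polynomial over $\QQ[n]$ has nonzero constant term, and hence $f_i(n)\neq f_j(n)$ for all but finitely many integers $n$; likewise, once $n$ is past all such exceptional values, the multiplicity of $f_i(n)$ as a root of $\chi_\lambda(n,\cdot)$ equals the multiplicity $\mu_{i,\lambda}$ of $f_i$ in $\chi_\lambda$ over $\overline{\QQ(n)}$, a constant, since no two distinct roots of $P$ can collide under specialization. Combining this with the standard fact that $\dim\Sp(\lambda[n])$ agrees for $n\gg0$ with a polynomial in $n$ (hook length formula), the bookkeeping of the previous paragraph gives: the number of distinct eigenvalues of $r_n$ is eventually $N$; those eigenvalues are $f_1(n),\dots,f_N(n)$, each algebraic over $\QQ(n)$; and the algebraic multiplicity of $f_i(n)$ in $r_n$ equals $\sum_{\lambda\in S}\mu_{i,\lambda}\dim\Sp(\lambda[n])$, a $\QQ$--linear combination of polynomials and hence a polynomial in $n$. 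These are exactly conclusions (1), (2), and (3).

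I do not expect a serious obstacle: the entire corollary is routine once Theorem~\ref{relationthm} is in hand, which is where the real work lies. The only place demanding care is the last step — the passage from ``$f_i$ is algebraic over $\QQ(n)$'' to ``$f_i$ agrees with an algebraic function of the integer variable $n$ for $n\gg0$'', and the claim that distinctness of the $f_i$ and constancy of the root multiplicities $\mu_{i,\lambda}$ survive specialization at \emph{every} large integer rather than merely on a Zariski--dense set. This is elementary — it reduces to the observation that a nonzero element of $\overline{\QQ(n)}$ has only finitely many integer zeros, applied to the differences $f_i-f_j$ and, for the multiplicity count, to the relevant discriminants — but it is the step one should write out with some care.
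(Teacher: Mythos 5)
Your proposal is correct and follows essentially the same route as the paper: use Schur's lemma and Theorem~\ref{relationthm} to reduce to fixed-size matrices $r_{n,\lambda}$ with entries polynomial in $n$, factor their characteristic polynomials over $\overline{\QQ(n)}$, show distinct algebraic roots specialize to distinct values for all but finitely many integers $n$, and weight the constant root-multiplicities by the polynomial $\dim\Sp(\lambda[n])$. The only cosmetic difference is that you argue the persistence of distinctness via the minimal polynomial of $f_i-f_j$ having nonzero constant term, while the paper uses the discriminant of a common polynomial for $f_i$ and $f_j$; these are equivalent elementary arguments.
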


\begin{remark}
In \cite[Theorem H]{RW} a version of the above theorem is claimed in the case of $\FI$--graphs and the edge relation. In that work it is said that the theorem would be proven in this paper. In the final section of this work we will explain why \cite[Theorem H]{RW} follows from Corollary \ref{maincor}.

A natural followup question related to the conclusions of Corollary \ref{maincor} is how one can leverage these statements about eigenvalues to say something about the statistics of random walks being performed on $\FI$-sets who have been paired with a transition relation. This is the topic of an upcoming paper of the first two authors.
\end{remark}

\section*{Acknowledgements}
The first author was supported by NSF grant DMS-1704811 and the second author was supported by NSF grant DMS-1600223. We would like to thank Tom Church for his helpful suggestions related to the proof of Theorem \ref{relationthm}.

\section{\texorpdfstring{$\FI$}{FI}-modules and representation stability}

\subsection{$\FI$--modules}
The present work is largely concerned with structures we refer to as $\FI$--sets. One of the primary tools we will use to study these objects, as well as one of the main motivations for considering $\FI$--sets in the first place, are $\FI$--modules.

$\FI$--modules were introduced by Church, Ellenberg, and Farb in their seminal work \cite{CEF}. It was later discovered that this concept arose in a variety of different, sometimes older, contexts such as the twisted commutative algebras of Sam and Snowden \cite{SS} and the study of polynomial functors (see \cite{D,DV} for modern treatments). In this work we will follow the exposition of Church, Ellenberg, and Farb.

\begin{definition}
Let $\FI$ denote the category whose objects are the sets $[n] := \{1,\ldots,n\}$ and whose maps are injections. An \textbf{$\FI$--module over $k$} is a (covariant) functor from $\FI$ to the category of $k$--modules. In this paper, $k$ is always a field of characteristic zero. We will often write $V_n := V([n])$ for the \textbf{degree $n$ piece of $V$}, and $f_\as := V(f)$.
\end{definition}
Since $\Hom_{\FI}([n], [n])$ is the symmetric group $\Sn_n$, each vector space $V_n$ is an $\Sn_n$--representation.


Our next objective will be to specialize to those objects which are finitely generated in the appropriate sense. One should note that the category of $\FI$--modules and natural transformations is abelian, with abelian operations defined point-wise.

\begin{definition}\label{finitelygenerated}
An $\FI$--module $V_{\dt}$ is said to be \textbf{finitely generated in degree $\leq d$} if there is a finite subset of $\bigsqcup_{n = 0}^d V_n$
which is not contained in any proper submodule of $V_{\dt}$.
\end{definition}

We will now define the analogue of free modules for the $\FI$--setting.

\begin{definition}
Let $W$ be a (left) $k[\Sn_m]$--module. Then the \textbf{free $\FI$--module on $W$} is the $\FI$--module $M(W)$ defined by the assignments
\[
M(W)_n := k[\Hom_{\FI}([m],[n])] \otimes_{\Sn_m} W,
\]
where $k[\Hom_{\FI}([m],[n])]$ is the free $k$--module with basis indexed by $\Hom_{\FI}([m],[n])$, viewed as a right $k[\Sn_m]$--module in the obvious way.
\end{definition}

We make the abbreviations $M(m) = M(k \Sn_m)$ (here $k \Sn_m$ is the regular representation of $\Sn_m$) and $M(\lambda) = M(\Sp(\lambda))$.

\begin{remark}\label{yoneda}
It is an easily verifiable fact that there are natural isomorphisms
\[
\Hom_{\FI}(M(W),V) \cong \Hom_{\Sn_m}(W,V_m).
\]
In particular, a map from $M(i)$ into $V_i$ is equivalent to choosing an element of $V_i$, and $\Hom_{\FI}(M(\lambda), V) \cong (V_{|\lambda|})_{\lambda}$.
\end{remark}

\begin{remark}
Because $k$ is a field of characteristic zero, the $\FI$--module $M(W)$ will always be projective. If we were to consider a general commutative ring $k$, then $M(W)$ is projective if and only if $W$ is a projective $k[\Sn_n]$--module. Group algebras of finite groups over characteristic zero fields are semi-simple, so this is automatic in our setting.
\end{remark}

An $\FI$--module $V$ is \textbf{finitely generated} if there exists a collection of non-negative integers, $(d_i)_{i\geq 0}$, all but finitely many equalling zero, and a surjection
\[
\bigoplus_{i \geq 0} M(i)^{d_i} \rightarrow V \rightarrow 0
\]

The following was first proven by Snowden in \cite{S} when $k$ was a field of characteristic 0, and later expanded to more general $k$ by Church, Ellenberg, Farb and Nagpal in \cite{CEF,CEFN}.

\begin{theorem}[Snowden, \cite{S}; Church, Ellenberg, Farb, and Nagpal \cite{CEF,CEFN}]
Let $V$ be a finitely generated $\FI$--module over a Noetherian ring $k$. Then every submodule of $V$ is also finitely generated.
\end{theorem}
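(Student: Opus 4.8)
The plan is to deduce this from a Gröbner-basis argument carried out not on $\FI$ itself but on a rigidified version of it. The reason a direct attack fails is that in each fixed degree $n$ the symmetric group $\Sn_n$ acts transitively on the natural basis of $M(m)_n$ (the injections $[m]\into[n]$), so there can be no total order on $\bigsqcup_n M(m)_n$ compatible with all $\FI$--morphisms, and hence no category-stable notion of ``leading term''. To remedy this I would introduce the category $\OI$ with the same objects as $\FI$ but with only the order-preserving injections as morphisms, together with the identity-on-objects functor $\Pi\colon\OI\to\FI$, and prove (a) every submodule of a finitely generated $\OI$--module is finitely generated, and (b) this conclusion transfers along $\Pi$ to $\FI$.

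For (a), the usual reductions apply: a finitely generated $\OI$--module is a quotient of a finite direct sum of free $\OI$--modules $\widetilde M(m)$ with $\widetilde M(m)_n = k[\Hom_{\OI}([m],[n])]$, submodules pull back along surjections, images of finitely generated modules are finitely generated, and finite direct sums of Noetherian modules are Noetherian since the abelian structure is pointwise. So it suffices to prove each $\widetilde M(m)$ satisfies the ascending chain condition on submodules. Now $\Hom_{\OI}([m],[n])$ is the set of $m$-element subsets of $[n]$, which I would encode by its vector of gap sizes in $\N^{m+1}$; under this encoding the divisibility preorder (``$S\preceq T$ iff $T$ factors through $S$ in $\OI$'') becomes the coordinatewise order on $\N^{m+1}$, a well-quasi-order by Dickson's lemma. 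Ordering $\bigsqcup_n \Hom_{\OI}([m],[n])$ first by $n$ and then lexicographically on the increasing sequence gives a well-order that every $\OI$--morphism preserves (monotone injections preserve lexicographic order), so one genuinely gets a Gröbner theory: monomial submodules are exactly the up-sets of the divisibility well-quasi-order and hence are finitely generated and satisfy ACC, while the passage from an arbitrary submodule $W$ to its table of leading-coefficient ideals (for each monomial $\mu$, the ideal of $k$ consisting of leading coefficients of elements of $W$ with leading monomial $\mu$) reduces ACC for submodules to ACC for such tables, which holds because $\N^{m+1}$ is a well-quasi-order and $k$ is Noetherian.

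For (b), I would use that every injection $[m]\into[n]$ factors as an order-preserving injection precomposed with a permutation of $[m]$. Hence if $V$ is a finitely generated $\FI$--module, generated in degree $\le d$, then its restriction $\Pi^*V$ is generated as an $\OI$--module already in degree $\le d$ (applying a permutation to the elements of $V_{\le d}$ does not leave their span), so $\Pi^*V$ is a finitely generated $\OI$--module. If $W\subseteq V$ is an $\FI$--submodule then $\Pi^*W\subseteq\Pi^*V$ is an $\OI$--submodule, finitely generated over $\OI$ by (a); but any finite $\OI$--generating set for $\Pi^*W$ is a fortiori an $\FI$--generating set for $W$, since order-preserving injections are injections. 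Thus $W$ is finitely generated, proving the theorem. (In the characteristic-zero setting used in the rest of this paper one can instead run Snowden's original argument, which identifies $\FI$--modules with modules over a suitable twisted commutative algebra and proves Noetherianity on that side.)

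The main obstacle here is conceptual rather than computational: realizing that the theorem must be proved for the ordered category $\OI$, where automorphisms no longer obstruct a monomial order, and then checking that $\OI$ and $\FI$ are ``close enough'' for finite generation to descend. For this particular pair the descent is transparent thanks to the factorization ``permutation, then monotone injection'', but in the general comparison framework for combinatorial categories this closeness is the substantive hypothesis. The Noetherian hypothesis on $k$ enters only in step (a), to guarantee that the ascending chains of leading-coefficient ideals terminate; over a field these chains are trivial and step (a) reduces to the familiar finite-dimensional Gröbner argument applied degree by degree.
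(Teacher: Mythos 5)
The paper does not supply a proof of this theorem at all; it is quoted as a black box from Snowden and from Church--Ellenberg--Farb--Nagpal, so there is no in-paper argument to compare yours against. What you have written is essentially a correct reconstruction of the Gr\"obner-theoretic proof that appears in Sam and Snowden's work on Gr\"obner methods for combinatorial categories (and underlies the CEFN proof): pass to $\OI$ because its hom-sets admit a morphism-compatible well-order, encode $\Hom_{\OI}([m],[n])$ by gap vectors so the factoring preorder becomes coordinatewise order on $\N^{m+1}$ (a WQO by Dickson), run a leading-term argument against a Noetherian coefficient ring, and descend to $\FI$ using that every injection is a permutation followed by an order-preserving injection.

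Two small places where you are terser than the argument actually requires, though neither is a real gap. First, the reduction ``ACC for submodules $\Rightarrow$ ACC for tables of leading-coefficient ideals'' uses that (i) the assignment $W \mapsto (\mu \mapsto I_\mu(W))$ is injective on submodules, which needs the well-ordering of monomials so one can reduce a putative element of $W_2\setminus W_1$ to one of strictly smaller leading monomial; and (ii) the assignment $\mu \mapsto I_\mu(W)$ is monotone for the factoring preorder, which needs the observation that applying $g_\ast$ to an element with leading monomial $\mu$ produces an element with leading monomial $g\circ\mu$ and the \emph{same} leading coefficient, because $g$ being order-compatible sends all lower terms below $g\circ\mu$. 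Second, ACC for monotone functions from a WQO poset into ideals of a Noetherian ring deserves one sentence: given a strictly ascending chain of such functions, pick at each stage a witnessing monomial, extract from the witnesses an infinite $\preceq$-ascending subsequence (WQO), and read off an infinite strictly ascending chain of ideals of $k$, contradicting Noetherianity. With those sentences supplied, your proof is complete and matches the standard published argument in both structure and substance.
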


The above Noetherian property is arguably the most powerful tool that one has access to when studying finitely generated $\FI$--modules. We will see that it grants us surprisingly brief, although admittedly non-constructive, proofs of certain combinatorial facts. This philosophy can be seen in the context of $\FI$--graphs and $\FI$--posets in \cite{RW} and \cite{G}, respectively.


We take the opportunity to quote:
\begin{proposition}[Church, Ellenberg, and Farb, \cite{CEF}]\label{tensorgen}
If $V$ is finitely generated in degree $\leq d$ and $W$ is finitely generated in degree $\leq e$, then
\begin{enumerate}
\item $V \oplus W$ is generated in degree $\leq \max\{d,e\}$;
\item $V \otimes W$ is generated in degree $\leq d+e$.
\end{enumerate}
\end{proposition}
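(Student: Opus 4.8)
The plan is to treat the two statements separately, reducing the tensor claim to a purely combinatorial fact about the free $\FI$-modules $M(m)$. For (1), I would argue straight from Definition~\ref{finitelygenerated}: pick finite generating sets $S \subseteq \bigsqcup_{n \le d} V_n$ and $T \subseteq \bigsqcup_{n \le e} W_n$. Since the $\FI$-module structure on $V \oplus W$ is defined componentwise, the submodule generated by $\{(s,0): s \in S\} \cup \{(0,t): t \in T\}$ contains $(v,0)$ for every $v$ in the submodule generated by $S$, hence all of $V \oplus 0$, and likewise all of $0 \oplus W$, so it is all of $V \oplus W$; and all these generators sit in degrees $\le \max\{d,e\}$. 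I also record the trivial remark that the image of a generating set under a surjection of $\FI$-modules is again a generating set, so any quotient of a module generated in degree $\le D$ is generated in degree $\le D$; this will be used below.

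For (2), I would first reduce to free modules. Each element $v \in V_m$ with $m \le d$ determines, via Remark~\ref{yoneda}, a map $M(m) \to V$, and together these assemble into a surjection $\bigoplus_a M(m_a) \twoheadrightarrow V$ with all $m_a \le d$; similarly $\bigoplus_b M(n_b) \twoheadrightarrow W$ with all $n_b \le e$. Since the tensor product is right exact and commutes with direct sums, this yields a surjection $\bigoplus_{a,b}\big(M(m_a) \otimes M(n_b)\big) \twoheadrightarrow V \otimes W$. By part (1) and the quotient remark, it then suffices to show that $M(m) \otimes M(n)$ is finitely generated in degree $\le m+n$.

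That last claim is the heart of the argument, and the place I expect the real (if modest) work to lie. Unwinding the definitions, $\big(M(m) \otimes M(n)\big)_N$ is the free $k$-module on pairs $(f,g)$ of injections $f \colon [m] \into [N]$, $g \colon [n] \into [N]$. Given such a pair, the subset $f([m]) \cup g([n]) \subseteq [N]$ has some size $s \le m+n$; letting $h \colon [s] \into [N]$ be the injection onto this subset (say, via the order-preserving bijection), both $f$ and $g$ factor through $h$, so $f = h \circ f'$ and $g = h \circ g'$ for injections $f' \colon [m] \into [s]$, $g' \colon [n] \into [s]$. The transition map $\big(M(m) \otimes M(n)\big)(h)$ then carries the basis element $(f',g')$ in degree $s$ to $(f,g)$ in degree $N$. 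Hence every basis element of $\big(M(m) \otimes M(n)\big)_N$ lies in the image of a transition map out of a degree $\le m+n$, and since each of those finitely many degree pieces is finite-dimensional, finitely many elements suffice. Therefore $\bigoplus_{a,b} M(m_a) \otimes M(n_b)$ is generated in degree $\le \max_{a,b}(m_a + n_b) \le d+e$, and passing to the quotient $V \otimes W$ completes the proof. The only point demanding any care is the combinatorial factorization through $h$ and its compatibility with the transition maps of $M(m)\otimes M(n)$, but this is straightforward.
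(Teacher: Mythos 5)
Your argument is correct. Note first that the paper does not prove this proposition; it is quoted directly from Church, Ellenberg, and Farb, so there is no internal proof to compare against. Your route---reducing, via free surjections, to showing that $M(m)\otimes M(n)$ is generated in degree $\le m+n$, and then factoring each pair of injections $(f,g)$ through the order-preserving injection onto $f([m])\cup g([n])$---is the natural argument and establishes exactly the needed bound. CEF's own treatment proves the stronger structural statement that $M(m)\otimes M(n)$ decomposes as a finite direct sum of modules $M(p)$ with $p\le m+n$, via essentially the same stratification by the size of $f([m])\cup g([n])$; you only need the generation bound and prove precisely that, which spares you from identifying the stabilizer representations attached to each stratum. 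Your supporting steps (the quotient remark, right exactness of the pointwise $\otimes_k$, the direct-sum case in part (1), and the passage to finitely many basis elements in degrees $\le m+n$) are all correctly justified.
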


\subsection{Representation stability} \label{defineLambdaComponent}

We recall the notation from the introduction: $\Sp(\lambda)$ is the Specht module. For an $\Sn_n$--representation $W$ and a partition $\lambda$ of $n$, we put $W_{\lambda} = \Hom_{\Sn_n}(\Sp(\lambda), W)$. For a partition $\lambda=(\lambda_1, \ldots, \lambda_r)$ and an integer $n$ with $n-|\lambda| \geq \lambda_1$, we put $\lambda[n] = (n-|\lambda|, \lambda_1, \lambda_2, \ldots, \lambda_r)$. When $n-|\lambda| < \lambda_1$, we put $\Sp(\lambda[n]) = 0$. 
We also recall that $M(\lambda)$ is shorthand for $M(\Sp(\lambda))$.

Let $V_{\dt}$ be a finitely generated $\FI$--module. Church, Ellenberg, and Farb proved that $V_{\dt}$ exhibits Representation Stability~\cite[Theorem~1.13]{CEF}. This result has three parts, the one of which that is usually cited is Stability of Multiplicities, which states the following: There is a positive integer $N$ and a sequence of nonnegative integers $m_{\lambda}$ indexed by partitions, such that all but finitely many $m_{\lambda}$ are $0$, and
\[ \dim (V_n)_{\lambda[n]} = m_{\lambda} \ \mbox{for}\ n \geq N . \]
We will need a more precise statement which, as we will explain, is also part of Church, Ellenberg, and Farb's result.

Let $\lambda$ be a partition, and let $n$ be large enough that $\lambda[n]$ is defined. 
So by Remark~\ref{yoneda}, we have a natural isomorphism
\[ \Hom_{\FI}(M(\lambda[n]), V) \cong (V_n)_{\lambda[n]} . \]
By the Pieri rule, there is a unique copy of $\Sp(\lambda[n+1])$ inside\footnote{We will often abbreviate induction from $\Sn_m$ to $\Sn_n$ by $\Ind_m^n$, and similarly write $\Res_m^n$ for restriction.} $M(\lambda[n])_{n+1} = \Ind_n^{n+1} \Sp(\lambda[n])$ so there is a unique up to scalar multiple map $M(\lambda[n+1]) \to M(\lambda[n])$. This induces a map $\Hom_{\FI}(M(\lambda[n]), V) \to \Hom_{\FI}(M(\lambda[n+1]), V)$. 
Church, Ellenberg, and Farb's result states that, for $n$ sufficiently large,  the vector spaces on the two sides of this map are of the same dimension. 
We require
\begin{theorem} \label{stabilization}
For $n$ sufficiently large, the map $\Hom_{\FI}(M(\lambda[n]), V) \to \Hom_{\FI}(M(\lambda[n+1]), V)$ is an isomorphism. 
\end{theorem}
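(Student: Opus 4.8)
The plan is to bootstrap from Church--Ellenberg--Farb's Stability of Multiplicities (which gives only the equality of dimensions $\dim \Hom_{\FI}(M(\lambda[n]), V) = \dim \Hom_{\FI}(M(\lambda[n+1]), V)$ for $n \gg 0$) to the stronger statement that the natural transition map is actually an isomorphism. The key observation is that, since $k$ has characteristic zero, the functor $W \mapsto W_\lambda = \Hom_{\Sn_n}(\Sp(\lambda), W)$ is exact, and likewise $\Hom_{\FI}(M(\mu), -)$ is exact on $\FI$--modules (because $M(\mu)$ is projective, as noted in the excerpt). So the whole diagram of transition maps behaves well with respect to short exact sequences of $\FI$--modules.

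First I would reduce to a surjectivity statement: a linear map between finite-dimensional vector spaces of equal dimension is an isomorphism if and only if it is surjective (or injective). So it suffices to show that for $n \gg 0$ the map $\Hom_{\FI}(M(\lambda[n]), V) \to \Hom_{\FI}(M(\lambda[n+1]), V)$ is surjective. Second, I would dualize or, more convenient here, work with the transition maps $M(\lambda[n+1]) \to M(\lambda[n])$ directly: the induced map on $\Hom_{\FI}(-, V)$ is surjective precisely when every $\FI$--map $M(\lambda[n+1]) \to V$ factors through $M(\lambda[n+1]) \to M(\lambda[n])$, i.e.\ when the cokernel $Q^{(\lambda)}_n := \coker(M(\lambda[n+1]) \to M(\lambda[n]))$ has the property that $\Hom_{\FI}(Q^{(\lambda)}_n, V) \to \Hom_{\FI}(M(\lambda[n]),V)$ hits everything we need — more precisely, I want to understand the $\FI$--module $K^{(\lambda)} := \coker\bigl(\bigoplus_{n} M(\lambda[n+1]) \to \bigoplus_n M(\lambda[n])\bigr)$ assembled compatibly, or better, to phrase everything in terms of a single auxiliary $\FI$--module. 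The cleanest route: the maps $M(\lambda[n+1]) \to M(\lambda[n])$ fit together, and one shows that their ``stable cokernel'' is a finitely generated $\FI$--module whose higher structure is controlled; then Noetherianity (the quoted Snowden / CEFN theorem) forces the relevant Hom's to stabilize, and a dimension count using CEF's multiplicity stability upgrades stabilization of dimension to the isomorphism claim.

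Concretely, here is the argument I would write. Fix $\lambda$ and consider the chain of vector spaces $A_n := \Hom_{\FI}(M(\lambda[n]), V) \cong (V_n)_{\lambda[n]}$ with transition maps $\phi_n : A_n \to A_{n+1}$. By CEF, $\dim A_n$ is eventually constant, say equal to $m_\lambda$, for $n \geq N_0$. It remains to show $\phi_n$ is injective (equivalently surjective, since dimensions agree) for $n \gg 0$. For injectivity I would argue as follows: the composite $A_n \to A_{n+1} \to \cdots \to A_{n+t}$ corresponds, under Remark~\ref{yoneda}, to precomposition with the composite map $M(\lambda[n+t]) \to M(\lambda[n])$; one checks (again by the Pieri rule / a direct computation in the branching graph for the symmetric groups) that this composite $\FI$--module map is the \emph{unique} (up to scalar) nonzero such map and, crucially, that the images of these maps generate a submodule of $M(\lambda[n])$ whose complement is a finitely generated $\FI$--module $T^{(\lambda)}_{n}$ supported in degrees $\le$ some explicit bound. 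Finitely generated $\FI$--modules have eventually vanishing ``torsion in a fixed degree'' in the precise sense that $\Hom_{\Sn_n}(\Sp(\mu[n]), -)$ applied to a finitely generated module with a bounded presentation stabilizes; applying $\Hom_{\FI}(-,V)$ and chasing the resulting exact sequences shows $\ker \phi_n$ stabilizes, and since $\dim A_n$ is eventually constant while $\phi_n$ is eventually part of a system whose composites have constant rank, $\ker \phi_n = 0$ for $n \gg 0$.

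The main obstacle, and the place I would spend the most care, is making rigorous the claim that the transition maps $M(\lambda[n+1]) \to M(\lambda[n])$ have ``stably trivial cokernel'' in the appropriate sense — i.e.\ extracting from CEF's proof (rather than just its statement) the fact that the relevant maps, not merely dimensions, stabilize. One clean way to sidestep a delicate reading of their proof: observe that the inverse system $(A_n, \phi_n)$ (or the direct system, depending on variance) has eventually constant dimension and that the maps are eventually \emph{split} in a way compatible with the $\FI$--structure on $V$; then a pigeonhole / stabilization argument on the lattice of subspaces $\operatorname{im}(A_m \to A_n)$ for $m \le n$ shows that once the dimension has stabilized, the eventual image must be everything, forcing the maps to be isomorphisms. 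I expect that either this pigeonhole argument, or a direct appeal to the full strength of \cite[Theorem 1.13]{CEF} (which includes ``surjectivity'' and ``injectivity'' parts beyond the multiplicity statement), will close the gap; I would present whichever is shorter, with a remark that this is essentially a repackaging of CEF's representation stability into the ``canonical stable value'' language of Section~\ref{defineLambdaComponent}.
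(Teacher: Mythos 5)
You correctly identify the target (upgrade CEF's dimension stability to an actual isomorphism of the natural transition map) and the right ambient tools (Noetherianity, exactness over characteristic zero, CEF's full Theorem~1.13), but none of the routes you sketch actually closes the gap, and the key structural idea of the paper's proof is absent. Your pigeonhole argument --- ``once the dimension has stabilized, the eventual image must be everything'' --- is simply not true without further input: a sequence of one-dimensional spaces with zero transition maps has constant dimension but the maps are not isomorphisms. Something must \emph{force} the images to be large, and that something is the generation property of finitely generated $\FI$--modules (CEF Proposition~3.3.3: the transition maps from $V_n$ span $V_{n+1}$), which you never invoke. Likewise, the attempt to argue via a bounded-degree cokernel of $M(\lambda[n+1]) \to M(\lambda[n])$ is not viable: that cokernel is a finitely generated $\FI$--module that is nonzero in infinitely many degrees, so its $\Hom(-,V)$ does not vanish for the reason you want.

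The idea you are missing is the filtration by the submodules $V^{\geq \lambda}_n := \bigoplus_{\mu \geq \lambda} (V_n)_{\mu[n]} \otimes \Sp(\mu[n])$, with respect to a total order on partitions refining size. The Pieri rule shows these form an $\FI$--submodule of $V$, Noetherianity shows it is finitely generated, and $\Hom_{\FI}(M(\lambda[n]), V) \cong \Hom_{\FI}(M(\lambda[n]), V^{\geq \lambda})$ by Yoneda. This lets you replace $V$ by $V^{\geq \lambda}$ and thereby assume the $\mu[n]$--isotypic pieces vanish for $\mu < \lambda$. The point is that when you convert the spanning statement (that $\Sn_{n+1}$ times the image of $V_n$ spans $V_{n+1}$) to the $\lambda[n+1]$--isotypic component via Frobenius reciprocity and Pieri, the only isotypic pieces of $V_n$ that can contribute to $(V_{n+1})_{\lambda[n+1]}$ are $(V_n)_{\lambda[n]}$ and $(V_n)_{\mu[n]}$ for $\mu$ obtained from $\lambda$ by deleting a box, hence $|\mu| < |\lambda|$. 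After the reduction these smaller pieces are zero, so $(V_n)_{\lambda[n]}$ alone must surject onto $(V_{n+1})_{\lambda[n+1]}$; combined with CEF's dimension stability this gives the isomorphism. Without the filtration, the extra contributions from the $\mu < \lambda$ pieces are exactly what blocks your argument from going through, and no amount of pigeonholing or generic appeal to Noetherianity will remove them.
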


\begin{remark}
This result, and our method of proof, is very similar to ideas from Sam and Snowden~\cite{SS}, particularly Section~2.2. We base our argument on Church, Ellenberg, and Farb~\cite{CEF} rather than Sam and Snowden in order to follow our general pattern of using the former's terminology, and because their paper is slightly earlier.
\end{remark}

\begin{proof} Let $\lambda$ and $\mu$ be partitions with $|\lambda| > |\mu|$ and let $m \leq n$ with $m$ and $n$ large enough that $\lambda[m]$ and $\mu[n]$ are defined. 
So $(V_m)_{\lambda[m]} \otimes \Sp(\lambda[m])$ is the $\lambda[m]$--isotypic component of $V_m$, and likewise for $\mu[n]$. 
We claim that, for any transition map $V_m \to V_n$, the composition 
\[ (V_m)_{\lambda[m]} \otimes \Sp(\lambda[m]) \stackrel{\alpha_{\lambda[m]}}{\longrightarrow} V_m \to V_n \stackrel{\beta_{\mu[n]}}{\longrightarrow} (V_n)_{\mu[n]} \otimes \Sp(\mu[n]) \]
is $0$. To see this, note that this map must be $\Sn_m$--equivariant, where we restrict the right hand side to some $\Sn_m \subset \Sn_n$. 
But, by the Pieri rule, $\Sp(\lambda[m])$ does not occur in $\left. \left( \Sp(\mu[n]) \right)\right|_{\Sn_m}$.
The same argument also shows that, if $|\lambda| = |\mu|$ and $\lambda \neq \mu$, then the composite map $ (V_m)_{\lambda[m]} \otimes \Sp(\lambda[m])  \longrightarrow (V_n)_{\mu[n]} \otimes \Sp(\mu[n])$ is $0$. 

Choose any total ordering of the set of partitions such that $|\lambda| < |\mu|$ implies $\lambda < \mu$. Define
\[ V_n^{\geq \lambda} = \bigoplus_{\mu \geq \lambda} V_{\mu[n]} \otimes \Sp(\mu[n]) \subseteq V_n . \]
Then the above argument shows that the transition maps carry $V_m^{\geq \lambda}$ to $V_n^{\geq \lambda}$ for any $m \leq n$. 
So the $V_m^{\geq \lambda}$ form a submodule of $V_{\dt}$, which we denote $V_{\dt}^{\geq \lambda}$. 

By Remark~\ref{yoneda}, we have natural isomorphisms $\Hom(M(\lambda[n]), V) \cong (V_n)_{\lambda[n]} \cong \Hom(M(\lambda[n]), V^{\geq \lambda})$. Also, since $V_{\dt}^{\geq \lambda}$ is a submodule of the finitely generated $\FI$--module $V_{\dt}$, it is finitely generated itself.  So we may, and do, replace $V$ by $V^{\geq \lambda}$. As a result, we may and do assume that $(V_n)_{\mu[n]}=0$ for $\mu < \lambda$.

Now, suppose that $\Hom_{\FI}(M(\lambda[n]), V) \to \Hom_{\FI}(M(\lambda[n+1]), V)$ is not an isomorphism. Since both of these are vector spaces of dimension $m_{\lambda}$, this means that the map is not surjective. So there is some $U \subsetneq (V_{n+1})_{\lambda[n+1]}$ in which the image of $\Hom_{\FI}(M(\lambda[n]), V)$ lies. Tracing through our isomorphisms, this means that all of our transition maps $(V_n)_{\lambda[n]} \to V_{n+1}$, followed by projection onto the $\lambda[n+1]$ isotypic component, land in $U \otimes \Sp(\lambda[n+1])$. Also, for every $\mu \neq \lambda$, the transition maps  $(V_n)_{\mu[n]} \to V_{n+1}$ project to $0$ in the $\lambda[n+1]$ isotypic component by our observations in the first paragraph of the proof. But~\cite[Proposition~3.3.3]{CEF} states that the images of the transition maps from $V_n$ span $V_{n+1}$. This contradiction establishes that $\Hom_{\FI}(M(\lambda[n]), V) \to \Hom_{\FI}(M(\lambda[n+1]), V)$ is an isomorphism after all.
\end{proof}

We define $(V_{\dt})_{\lambda}$ to be the inductive limit $\lim_{n \to \infty} \Hom_{\FI}(M(\lambda[n]), V)$.
So, by Theorem~\ref{stabilization}, $(M_{\dt})_{\lambda} \cong (V_n)_{\lambda[n]}$ for all sufficiently large $n$. 
Thus, if $V_{\dt}$ and $W_{\dt}$ are two $\FI$ modules, and $r_n: V_n \to W_n$ are a sequence of maps of $\Sn_n$--representations then, for $n$ large, each $r_n$ induces a map $r_{n, \lambda}: (V_{\dt})_{\lambda} \to (W_{\dt})_{\lambda}$. 
These are the maps referred to in Theorem~B.

The inclusion $M(\lambda[n+1]) \to M(\lambda[n])$ is only unique up to multiplication by a scalar. We fix choices of these scalars once and for all for the purpose of defining $(V_{\dt})_{\lambda}$. If we changed to other scalars, there would be a canonical isomorphism between the old and the new $(V_{\dt})_{\lambda}$, and the maps $r_{n, \lambda}$ above would be unchanged.

\section{\texorpdfstring{$\FI$}{FI}--sets}

\subsection{Elementary definitions and properties}

In this section we define the fundamental object of study for this paper: $\FI$--sets.

\begin{definition}
An $\FI$--set is a (covariant) functor from $\FI$ to the category of finite sets. We will usually denote $\FI$--sets by $X_\dt$ or $Y_\dt$, where evaluation at $[n]$ is written $X_n$. If $X_\dt$ is an $\FI$--set, and $f:[n] \hookrightarrow [m]$ is an injection, we will generally write  $f_{\ast}$ to denote the induced map, or $X(f)$ if $X$ is not clear from context. For any non-negative integers $n < m$, we will write $\iota_{n,m}:[n] \hookrightarrow [m]$ for the standard inclusion.

We say that $Y_\dt$ is a \textbf{subset} of $X_\dt$ if there is a map of $\FI$--sets $Y_\dt \rightarrow X_\dt$ which is injective point-wise. We say that $X_\dt$ is \textbf{torsion-free} if $X(f)$ is injective for all choices of $f$.
\end{definition}

To the knowledge of the authors, this is the first paper which has formally considered $\FI$--sets. That being said, related structures have been studied in the past. For instance, the first and third authors considered $\FI$--graphs in \cite{RW}, while Gadish studied $\FI$--posets in \cite{G}.

Just as with $\FI$--modules, we will begin by defining finite generation for $\FI$--sets.

%

\begin{definition}\label{fgset}
We say that an $\FI$--set $X_{\dt}$ is \emph{finitely generated in degree $\geq d$} if there is a finite subset of $\bigsqcup_{i=0}^d X_i$ which is not contained in any proper $\FI$--subset of $X_{\dt}$. 
We say that $X_{\dt}$ is finitely generated if it is finitely generated in some degree.
\end{definition}
 Note, if $X_{\dt}$ is finitely generated, then all the $X_n$ are finite.

\begin{definition}
For any $\FI$--set $X_{\dt}$, the \emph{linearization} $k X_{\dt}$ is the $\FI$--module where $(kX_{\dt})_n$ is the free $k$--vector space on the set $X_n$, with the obvious maps. 
\end{definition}

Linearization is a functor from $\FI$--sets to $\FI$--modules.
We see that $X_\dt$ is finitely generated if and only if $k X_\dt$ is (and in the same degree). This yields some immediate consequences:

\begin{prop} \label{elementstab}
Let $X_\dt$ be a finitely generated $\FI$--set. For $n$ sufficiently large, and  $f:[n] \hookrightarrow [n+1]$ any injection, the map $f_{\ast}: X_n \to X_{n+1}$ is injective.
\end{prop}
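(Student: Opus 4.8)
The plan is to pass through the linearization $kX_\dt$ and exploit the Noetherian property of finitely generated $\FI$--modules. First I would observe that $X_\dt$ finitely generated implies $kX_\dt$ is a finitely generated $\FI$--module, so by the Church--Ellenberg--Farb stability theory (specifically the stabilization result, Theorem~\ref{stabilization}, and the structure of representation stability), the maps $(kX_n)_{\lambda[n]} \to (kX_{n+1})_{\lambda[n+1]}$ are isomorphisms for $n$ large. In particular $\dim (kX_n) = |X_n|$ is eventually a polynomial in $n$, but more to the point the transition map $f_\ast : kX_n \to kX_{n+1}$ is eventually injective: indeed, the kernel of the transition maps forms a submodule (the ``torsion'' submodule) of the finitely generated module $kX_\dt$, hence is itself finitely generated, hence supported in bounded degree; once $n$ exceeds that degree bound, $f_\ast$ on $kX_n$ has no kernel.

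Next I would transfer injectivity of $f_\ast : kX_n \to kX_{n+1}$ back to injectivity of $f_\ast : X_n \to X_{n+1}$. This is immediate: $f_\ast$ on $kX_n$ sends the basis vector $e_x$ to the basis vector $e_{f_\ast(x)}$, so if $f_\ast(x) = f_\ast(x')$ in $X_{n+1}$ then $e_{f_\ast(x)} = e_{f_\ast(x')}$, i.e. $f_\ast(e_x) = f_\ast(e_{x'})$ in $kX_{n+1}$, and injectivity of the linear map forces $e_x = e_{x'}$, hence $x = x'$. The only subtlety is that ``for $n$ sufficiently large'' must be uniform over the choice of injection $f : [n] \hookrightarrow [n+1]$; but any two such injections differ by post- and pre-composition with elements of $\Sn_{n+1}$ and $\Sn_n$, which act bijectively, so injectivity of one transition map $X_n \to X_{n+1}$ is equivalent to injectivity of all of them. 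Thus a single bound on $n$ suffices.

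The one genuine point requiring care is the claim that the kernels of transition maps assemble into a submodule of $kX_\dt$. Concretely, define $T_n \subseteq kX_n$ to be the set of $v$ such that $(\iota_{n,m})_\ast(v) = 0$ for some $m \geq n$ (equivalently for all sufficiently large $m$, since once a vector dies it stays dead); one checks this is a subrepresentation of $kX_n$ stable under all transition maps, hence an $\FI$--submodule $T_\dt$. By the Noetherian theorem, $T_\dt$ is finitely generated, say in degree $\leq D$. I then claim $T_n = 0$ for $n > D$: any element of $T_n$ is a $k$--linear combination of images of elements of $T_i$ for $i \leq D$ under transition maps. Actually the cleanest route is: since $kX_\dt / T_\dt$ has all transition maps injective and $kX_\dt$ is finitely generated, one argues $T_\dt$ vanishes in high degree directly from finite generation of $T_\dt$ plus the fact that $T_\dt$, being a submodule of a linearization of a finitely generated $\FI$--set, is itself such a linearization-adjacent object; but it is simplest to just cite that a finitely generated $\FI$--module is supported in the degrees $\geq$ its generation degree only through induced elements, and a torsion submodule generated in degree $\leq D$ that injects... — in any case the main obstacle, and the step I would write most carefully, is pinning down that the ``eventual kernel'' submodule $T_\dt$ of $kX_\dt$ is trivial in large degree, which follows because $T_\dt$ is finitely generated (Noetherianity) and every element of $T_n$ for $n$ large is in the image of a transition map from some $T_i$ with $i$ bounded, yet such images cannot be killed by further transition maps without already being zero — a contradiction unless $T_\dt$ is eventually zero.
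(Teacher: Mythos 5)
Your overall route is the same one the paper takes: pass to the linearization $kX_\dt$, observe that injectivity of $f_\ast : X_n \to X_{n+1}$ is equivalent to injectivity of $f_\ast : kX_n \to kX_{n+1}$ (since $f_\ast$ permutes basis vectors), and then invoke the fact that transition maps of a finitely generated $\FI$--module are eventually injective. The paper simply cites this last fact from~\cite{CEF}; you attempt to re-derive it via the torsion submodule $T_\dt$ and Noetherianity, and you also make explicit the (correct, and worth saying) point that injectivity for one injection $[n] \into [n+1]$ is equivalent to injectivity for all of them, via composition with permutations.

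Where your write-up has a genuine gap is the final step. The sentence ``such images cannot be killed by further transition maps without already being zero --- a contradiction unless $T_\dt$ is eventually zero'' is not a valid deduction: an element $g_\ast(v)$ with $v \in T_i$ can perfectly well be nonzero yet killed by a further transition map --- indeed that is exactly what it means for $g_\ast(v)$ to lie in $T_n$, so there is no contradiction there. The correct way to finish is: finite generation gives finitely many generators $v_1,\ldots,v_s$ of $T_\dt$ in degrees $d_1,\ldots,d_s \le D$; each $v_j$ is torsion, so $(\iota_{d_j,m_j})_\ast(v_j)=0$ for some $m_j$ (after adjusting by a permutation we may take the killing injection to be the standard one), and then for every $n \ge m_j$ and every injection $g:[d_j]\into[n]$ we have $g_\ast(v_j) = \sigma_\ast (\iota_{d_j,n})_\ast(v_j) = \sigma_\ast (\iota_{m_j,n})_\ast(\iota_{d_j,m_j})_\ast(v_j) = 0$, where $\sigma\in\Sn_n$ satisfies $g = \sigma\circ\iota_{d_j,n}$. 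Taking $n \ge \max_j m_j$, every spanning element of $T_n$ vanishes, so $T_n=0$. With that substitution your argument is correct; as written, the key step is asserted rather than proved.
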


\begin{proof}
The map $f_{\ast}: X_n \to X_{n+1}$ is injective if and only if the linearization  $f_{\ast}: kX_n \to kX_{n+1}$ is, and the latter is true for $n \gg 0$ by the representation stability theorem of~\cite{CEF}.
\end{proof}

\begin{prop} \label{numberorbitsstab}
Let $X_\dt$ be a finitely generated $\FI$--set. For $n$ sufficiently large, the number of orbits of $\Sn_n$ on $X_n$ is a constant independent of $n$. 
\end{prop}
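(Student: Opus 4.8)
The plan is to deduce this from the representation stability theorem of Church, Ellenberg, and Farb applied to the linearization $k X_\dt$, exactly as in the proof of Proposition~\ref{elementstab}. The key observation is that the number of $\Sn_n$--orbits on a finite $\Sn_n$--set $X_n$ equals the dimension of the space of $\Sn_n$--invariants in the linearization, i.e.
\[
\#(X_n / \Sn_n) = \dim_k (k X_n)^{\Sn_n} = \dim_k (k X_n)_{(n)},
\]
where $(n)$ denotes the trivial partition and $(k X_n)_{(n)} = \Hom_{\Sn_n}(\Sp(n), k X_n)$ is the multiplicity space of the trivial representation. Indeed, a basis for $(k X_n)^{\Sn_n}$ is given by the orbit sums $\sum_{x \in \mathcal{O}} e_x$ as $\mathcal{O}$ ranges over the orbits of $\Sn_n$ on $X_n$.

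The next step is to recognize that, in the notation of Section~\ref{defineLambdaComponent}, the trivial partition $(n)$ is precisely $\lambda[n]$ for $\lambda = \varnothing$ the empty partition (with $|\lambda| = 0$), so $(k X_n)_{(n)} = (k X_n)_{\varnothing[n]}$. Since $X_\dt$ is finitely generated, its linearization $k X_\dt$ is a finitely generated $\FI$--module, so the Stability of Multiplicities part of Church, Ellenberg, and Farb's theorem (or Theorem~\ref{stabilization} together with the finite-dimensionality of $(k X_\dt)_{\varnothing}$) gives that $\dim_k (k X_n)_{\varnothing[n]}$ is independent of $n$ for $n$ sufficiently large. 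Combining this with the identity in the previous paragraph yields that the number of orbits of $\Sn_n$ on $X_n$ is eventually constant.

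There is essentially no serious obstacle here; the only thing to be careful about is the elementary but genuinely used fact that orbit sums form a basis of the invariant subspace (this requires $X_n$ finite, which holds by the remark following Definition~\ref{fgset}, and is what makes the passage from sets to modules lossless at the level of the trivial isotypic component). One could alternatively phrase the argument purely in terms of the structure Theorem~\ref{structurethm}: once $n$ is large, $X_n \cong \bigsqcup_i \Sn_n/(H_i \times \Sn_{n-m_i})$, which has exactly (the number of indices $i$) orbits, manifestly independent of $n$. Either route works; I would present the linearization argument since it is self-contained given the results already quoted and does not depend on the more elaborate Theorem~\ref{structurethm}.
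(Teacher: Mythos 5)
Your argument is exactly the paper's: the number of orbits equals the multiplicity of the trivial representation $\Sp(n) = \Sp(\varnothing[n])$ in $k X_n$, which stabilizes by Church--Ellenberg--Farb since $k X_\dt$ is finitely generated. (One small caution on your aside: the alternative route through Theorem~\ref{structurethm} would be circular here, since that theorem's proof relies on Proposition~\ref{EventuallyBijective}, which in turn uses the present proposition.)
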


\begin{proof}
The number of orbits of $\Sn_n$ on $X_n$ is the multiplicity of $\Sp(n)$ in $k X_n$. Since $\Sp(n) = \Sp(\emptyset[n])$, this is eventually constant by~\cite{CEF}.
\end{proof}

Every inclusion $f: [n] \to [n+1]$ induces a map on the orbit sets $X_n/\Sn_n \to X_{n+1}/\Sn_{n+1}$, and all of these maps are the same.

\begin{prop} \label{EventuallyBijective}
Let $X_\dt$ be a finitely generated $\FI$--set. For $n$ sufficiently large, the map $X_n/\Sn_n \to X_{n+1}/\Sn_{n+1}$ described above is bijective.
\end{prop}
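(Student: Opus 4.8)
## Proof proposal

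The plan is to combine the previous two propositions with the structure statement for $X_\dt$ that is already essentially available. By Proposition~\ref{numberorbitsstab}, for $n$ large the number of $\Sn_n$--orbits on $X_n$ is a constant, say $N$; so it suffices to show that the map $X_n/\Sn_n \to X_{n+1}/\Sn_{n+1}$ is injective for $n \gg 0$, since a surjection between finite sets of the same size $N$ is automatically a bijection. (Alternatively one can argue injectivity and surjectivity separately, but the counting shortcut is cleaner.)

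First I would reduce to the injectivity claim. Suppose $x, x' \in X_n$ lie in different $\Sn_n$--orbits but $\iota_{n,n+1,\ast}(x)$ and $\iota_{n,n+1,\ast}(x')$ lie in the same $\Sn_{n+1}$--orbit; I want to derive a contradiction for $n$ large. Passing to the linearization $kX_\dt$, the images $e_x, e_{x'}$ under the transition map $kX_n \to kX_{n+1}$ have the property that $\sum_{\sigma \in \Sn_{n+1}} \sigma \cdot e_{\iota_\ast x} = \sum_{\sigma} \sigma \cdot e_{\iota_\ast x'}$ up to a positive scalar, i.e. the projections of $e_{\iota_\ast x}$ and $e_{\iota_\ast x'}$ to the $\Sp(n+1) = \Sp(\emptyset[n+1])$--isotypic component of $kX_{n+1}$ agree. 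The key point is to upgrade this: I would consider the trivial-isotypic (i.e. $\Sp(n)$--) piece $(kX_\dt)_{\emptyset}$ together with the maps $r_{n,\emptyset}$ this data induces, or more directly argue with the submodule $(kX_\dt)^{\geq \emptyset} = kX_\dt$ itself and the combinatorics of orbits. Concretely: the number of $\Sn_n$--orbits equals $\dim (kX_n)^{\Sn_n}$, a fixed $N$ for $n$ large; pick orbit representatives, giving a basis of $(kX_n)^{\Sn_n}$ by orbit sums; the transition map sends the orbit sum of $\mathcal{O} \subseteq X_n$ to a non-negative integer combination of orbit sums of $X_{n+1}$, and Proposition~\ref{EventuallyBijective}'s surjectivity (which follows directly from finite generation in degree $\le d$ and $n \ge d$) forces this $N \times N$ non-negative integer matrix to be, for $n$ large, a bijection of bases; any such matrix is a permutation matrix.

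So the steps, in order, are: (1) observe that finite generation in degree $\le d$ gives surjectivity of $X_n/\Sn_n \to X_{n+1}/\Sn_{n+1}$ for $n \ge d$, essentially by definition of generation together with Proposition~\ref{numberorbitsstab} (every element of $X_{n+1}$ is in the image of some transition map from some $X_i$ with $i \le d \le n$, hence $\Sn_{n+1}$--conjugate to something in the image from $X_n$); (2) invoke Proposition~\ref{numberorbitsstab} to fix the common cardinality $N$ for $n$ large; (3) conclude that a surjection between two $N$--element sets is a bijection. The main obstacle, and the only place needing care, is step~(1): one must check that an element of $X_{n+1}$ in the image of a transition map $X_i \to X_{n+1}$ with $i \le d$ is automatically in the image, up to the $\Sn_{n+1}$--action, of the standard inclusion $\iota_{n,n+1}$ composed with a transition map $X_i \to X_n$ — this is where one uses that any injection $[i] \hookrightarrow [n+1]$ with $i \le n$ factors through some injection $[i]\hookrightarrow[n]$ followed by a permutation-adjusted copy of $\iota_{n,n+1}$, so that functoriality identifies the $\Sn_{n+1}$--orbits. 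Once that bookkeeping is done the proposition is immediate.
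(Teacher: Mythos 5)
Your concluding three-step plan — surjectivity of $X_n/\Sn_n \to X_{n+1}/\Sn_{n+1}$ from finite generation, equal cardinality from Proposition~\ref{numberorbitsstab}, and the observation that a surjection between finite sets of the same size is a bijection — is correct and is exactly the paper's argument. The only real difference is in the surjectivity step: the paper linearizes and cites that $\mathrm{Span}_{\sigma \in \Sn_{n+1}} \sigma \cdot kX_n = kX_{n+1}$ for $n$ large in any finitely generated $\FI$--module, whereas you argue directly at the level of sets by factoring any injection $[i] \hookrightarrow [n+1]$ with $i \leq n$ through $[n]$ after post-composing with a transposition. Both are fine; yours avoids linearization for this step. (Note also that you do not need Proposition~\ref{numberorbitsstab} for the surjectivity step, despite the parenthetical in your step~(1); finite generation alone suffices there.)

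The middle paragraph of your write-up, however, is a dispensable and partly incorrect detour. You announce a reduction to injectivity but then invoke surjectivity inside that very argument, and you assert that a non-negative integer $N \times N$ matrix giving a linear isomorphism must be a permutation matrix. That last claim is false: for $N = 2$, the unipotent matrix with first row $(1\ 1)$ and second row $(0\ 1)$ is a non-negative integer isomorphism that is not a permutation matrix. It is also not justified that the transition map on $\Sn$--invariants is given by a square matrix of the shape you describe without further averaging. None of this is needed, since your final steps prove surjectivity directly and then count; simply delete that paragraph.
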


\begin{proof}
By Proposition~\ref{numberorbitsstab}, for $n$ large enough, $|X_n/\Sn_n|  = |X_{n+1}/\Sn_{n+1}|$. So, it is enough to show that the map is surjective for $n$ sufficiently large. Suppose that $u \in X_{n+1}$ is such that the orbit $\Sn_{n+1} u$ is not in the image of this map. Then the corresponding basis vector $e_u$ of $k X_{n+1}$ is not in $\mathrm{Span}_{\sigma \in \Sn_{n+1}} \sigma k X_n$. But finite generation implies that $\mathrm{Span}_{\sigma \in \Sn_{n+1}} \sigma V_n=V_{n+1}$ for $n$ sufficiently large, in any finitely generated $\FI$--module $V_{\dt}$.
\end{proof}

\begin{definition}
The previous proposition implies that we may define,
\[
X_{\dt}/\Sn := \lim_{n \to \infty} X_n/\Sn_n.
\]
Elements of $X_{\dt}/\Sn$ will be referred to as the \textbf{stable orbits}, or just the \textbf{orbits} of $X_\dt$.
\end{definition}

\subsection{Induced $\FI$--sets}

In this section we discuss the properties of what we call induced $\FI$--sets.

\begin{definition}\label{inducedset}
Let $m$ be a fixed non-negative integer, and let $X$ be an $\Sn_m$--set. 

For any $n \geq m$, we define $M(X)_n$ to be the set of ordered pairs $(f,x)$
where $x \in X$ and $f \in \Hom_{\FI}([m],[n])$ is strictly increasing. For $n < m$, we set $M(X)_n = \emptyset$.
Given an injection $g:[n] \to [p]$, we define $g_{\ast} : M(X)_n \to M(X)_p$ as follows: 
We can uniquely factor $g \circ f$ as $h \circ \sigma$ where $\sigma \in \Sn_m$ and $h$ is strictly increasing. We put
\[ g_{\ast}(f,x) = (h,\sigma x) .\]

$\FI$-sets of the form $M(X)_\dt$ are known as \textbf{induced $\FI$-sets}.\\
\end{definition}

The following lemma is clear from the definition of the induced $\FI$--set, and of induced representations. 
\begin{lemma}
Let $X$ be an $\Sn_m$--set. Then 
\[
k M(X_{\dt}) \cong M(k X_{\dt}).
\]
\end{lemma}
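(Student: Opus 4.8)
The plan is to produce an explicit natural isomorphism by matching up natural bases in each degree, and then to check compatibility with the transition maps. Throughout, $M(X)_\dt$ denotes the induced $\FI$--set of Definition~\ref{inducedset} and $M(kX)_\dt$ the free $\FI$--module $M(kX)_n = k[\Hom_{\FI}([m],[n])]\otimes_{\Sn_m} kX$ on the $k[\Sn_m]$--module $kX$.

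First I would fix $n$ and describe the two sides explicitly. For $n < m$ both $M(X)_n$ and $M(kX)_n$ are empty, respectively zero, so assume $n \geq m$. By construction $kM(X)_n$ has basis $\{ e_{(f,x)} \}$ indexed by pairs $(f,x)$ with $f : [m] \into [n]$ strictly increasing and $x \in X$. On the other side, every injection $g : [m]\into [n]$ factors uniquely as $g = h\circ\sigma$ with $h$ strictly increasing and $\sigma \in \Sn_m$; hence in $M(kX)_n$ one has $g \otimes v = h\sigma \otimes v = h \otimes \sigma v$, so $M(kX)_n$ is spanned by the elements $h\otimes e_x$ with $h$ strictly increasing and $x$ ranging over $X$. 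A dimension count against $kM(X)_n$ (or a direct linear independence check using the uniqueness of the factorization) shows that $\{ h \otimes e_x \}$ is a basis of $M(kX)_n$.

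Next I would define $\Phi_n : kM(X)_n \to M(kX)_n$ on basis elements by $\Phi_n(e_{(f,x)}) = f \otimes e_x$, extended linearly; by the previous paragraph this is a bijection of bases, hence a linear isomorphism in each degree. It remains to verify naturality. Given an injection $g : [n]\into [p]$ and a basis element $e_{(f,x)}$, write $g\circ f = h\circ\sigma$ with $h$ strictly increasing and $\sigma \in \Sn_m$. Then Definition~\ref{inducedset} gives $g_\ast(e_{(f,x)}) = e_{(h,\sigma x)}$, so $\Phi_p(g_\ast e_{(f,x)}) = h\otimes e_{\sigma x}$. On the other hand, $g_\ast(\Phi_n(e_{(f,x)})) = g_\ast(f\otimes e_x) = (gf)\otimes e_x = (h\sigma)\otimes e_x = h\otimes(\sigma e_x) = h\otimes e_{\sigma x}$, using the relation in $\otimes_{\Sn_m}$ and the fact that $\sigma e_x = e_{\sigma x}$ under linearization. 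The two expressions agree, so the $\Phi_n$ assemble into an isomorphism of $\FI$--modules.

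I do not expect a genuine obstacle here: the argument is bookkeeping. The single point that deserves care is the observation that the unique ``strictly increasing $\circ$ permutation'' factorization used in Definition~\ref{inducedset} to define the transition maps of $M(X)_\dt$ is precisely the factorization that governs the right $k[\Sn_m]$--action in the coend $\otimes_{\Sn_m}$ defining $M(kX)_\dt$ (together with the left $\Sn_m$--action on $kX$ being the linearization of the left $\Sn_m$--action on $X$); once this is noted, the naturality computation above goes through verbatim.
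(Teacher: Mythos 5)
Your proof is correct and is exactly the bookkeeping that the paper elides: the authors state the lemma is ``clear from the definition of the induced $\FI$--set, and of induced representations,'' and your argument simply unpacks that claim by matching the basis $\{e_{(f,x)}\}$ of $kM(X)_n$ with the basis $\{f\otimes e_x\}$ of $M(kX)_n$ via the unique strictly-increasing-times-permutation factorization and verifying naturality. No gaps; same approach.
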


We observe that Theorem \ref{structurethm} is straight-forward for induced $\FI$--sets.

\begin{lemma}\label{inducedversion}
Let $X$ be an $\Sn_m$--set, and write
\[
X = \bigsqcup_{i \in I} \Sn_m/H_i
\]
for some indexing set $I$. Then for any $n \geq 0$,
\[
M(X)_n = \bigsqcup_{i \in I} \Sn_n/(H_i \times \Sn_{n-m}),
\]
where implicitly $\Sn_n/(H_i \times \Sn_{n-m})$ is empty for $n < m$.
\end{lemma}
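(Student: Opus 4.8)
The plan is to unwind both sides of the claimed identity directly from Definition~\ref{inducedset} and to exhibit an explicit, $\Sn_n$--equivariant bijection. First I would reduce to the case where $X$ is a single orbit $\Sn_m/H$, since both $M(-)_n$ and the disjoint union on the right-hand side are compatible with disjoint unions of $\Sn_m$--sets: an $\Sn_m$--set map $X \sqcup X' \to (\text{functor})$ is just a pair of such maps, and the formula for $g_\ast$ in Definition~\ref{inducedset} only ever touches the $X$--coordinate through the action of $\Sn_m$, so it respects the decomposition $X = \bigsqcup_i \Sn_m/H_i$. Thus it suffices to prove $M(\Sn_m/H)_n \cong \Sn_n/(H \times \Sn_{n-m})$ as $\Sn_n$--sets, with the empty-set conventions handled separately for $n < m$ (where both sides are visibly empty).

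Next I would fix $n \ge m$ and set up the bijection. By definition $M(\Sn_m/H)_n$ consists of pairs $(f, xH)$ with $f \colon [m] \into [n]$ strictly increasing and $xH \in \Sn_m/H$. The natural map to try is the following: given such a pair, any injection $f \colon [m] \into [n]$ together with a choice of $x \in \Sn_m$ determines an injection $[m] \into [n]$, namely $f \circ x$; extend this (in the unique order-preserving way on the complement) to a permutation of $[n]$, giving a well-defined element of $\Sn_n/(H \times \Sn_{n-m})$ once one checks the coset is independent of the choice of $x$ in $xH$ and of the order-preserving extension modulo $\Sn_{n-m}$. Concretely, the map sends $(f, xH)$ to the coset $\sigma (H \times \Sn_{n-m})$ where $\sigma \in \Sn_n$ is any permutation restricting to $f \circ x$ on $[m]$. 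I would check (i) well-definedness: changing $x$ to $xh$ with $h \in H$ multiplies $\sigma$ on the right by an element of $H \subseteq H \times \Sn_{n-m}$, and changing the extension multiplies on the right by an element of $\Sn_{n-m}$; (ii) injectivity: from a coset $\sigma(H\times\Sn_{n-m})$ one recovers $f$ as the strictly increasing map with the same image as $\sigma|_{[m]}$, and $xH$ as the class of $\sigma|_{[m]}$ post-composed with $f^{-1}$ onto that image; (iii) surjectivity: every coset has such a representative since $\sigma|_{[m]}$ factors as (strictly increasing) $\circ$ (permutation of $[m]$). Finally I would verify $\Sn_n$--equivariance by comparing the action $g_\ast(f,x) = (h, \sigma x)$ from Definition~\ref{inducedset}, where $g f = h \sigma$ with $h$ strictly increasing, against left multiplication by $g$ on the coset; both send the pair to the coset of $g \circ (f \circ x) = h \circ (\sigma x)$ extended order-preservingly, so they agree.

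The main obstacle — really the only delicate point — is the bookkeeping around the two independent sources of ambiguity in choosing $\sigma$: the right $H$--ambiguity coming from the $\Sn_m/H$ quotient and the right $\Sn_{n-m}$--ambiguity coming from extending an injection $[m]\into[n]$ to a permutation of $[n]$. One has to confirm these two ambiguities are exactly the two factors of $H \times \Sn_{n-m}$ (which commute because they act on disjoint blocks $[m]$ and $\{m+1,\dots,n\}$ after reindexing) and that they don't interfere, so that the map is genuinely well-defined on cosets. Everything else is a routine check that the order-preserving factorization used in Definition~\ref{inducedset} is the same combinatorial gadget appearing in the standard coset description of induced sets. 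I would also remark that this is exactly the set-level shadow of the module-level isomorphism $M(\Ind_H^{\Sn_m} k) \cong M(\text{something})$ underlying the preceding lemma $kM(X_\dt) \cong M(kX_\dt)$, so one could alternatively deduce the statement by linearizing and using faithfulness of linearization on basis elements, but the direct combinatorial argument is cleaner and self-contained.
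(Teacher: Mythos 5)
Your proposal is correct, and it is essentially the paper's argument cast in a more explicit form: the paper fixes a representative $(f,x)$ in each orbit and computes its stabilizer to be $H_i \times \Sn_{n-m}$ (implicitly invoking orbit--stabilizer), whereas you construct the resulting $\Sn_n$--equivariant bijection $(f, xH) \mapsto \sigma(H \times \Sn_{n-m})$ directly and verify well-definedness, bijectivity, and equivariance by hand. Your write-up is also somewhat more careful about the preliminary reduction to a single orbit and the degenerate case $n < m$, both of which the paper leaves implicit, but there is no genuinely different idea at work.
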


\begin{proof} 
We must show that the stabilizers of $M(X)_n$ are of the form $H_i \times \Sn_{n-m}$. Indeed, given an element $(f,x)$, where $x$ is stabilized by $H_i$, for any $g \in \Sn_n$, we have $g_{\ast}(f,x) = (f,x)$ if and only if $g \circ f = f \circ \sigma$ for $\sigma \in H_i$.
This will occur if and only if $g$ maps $[m]$ to itself by $\sigma$. So $g$ stabilizes $(f,x)$ if and only if $g \in H_i \times \Sn_{n-m}$.
\end{proof}

One way to interpret Theorem~\ref{structurethm} is that every finitely generated $\FI$--set eventually ``looks like'' an induced $\FI$--set. More precisely, if $X_\dt$ is a finitely generated $\FI$--set which is generated in degree $\leq d$, then there exists a collection $\{Y_i\}_{i=0}^d$, with $Y_i$ a $\Sn_i$--set, and an isomorphism of $\Sn_n$--sets
\[
X_n \cong \sqcup_i M(Y_i)_n,
\]
for all $n \gg 0$. An analogous theorem has been known to be true about $\FI$--modules since at least the work of Nagpal \cite{N}.\\

\subsection{Motivating examples} \label{examples}

In this section, we take time to write down a collection of motivating examples for the study of $\FI$--sets. Our focus will be on constructing illustrative examples of $\FI$--graphs. An $\FI$--graph is a functor from $\FI$ to the category of graphs. In other words, an $\FI$--graph is an $\FI$--set of vertices, paired with a symmetric relation which dictates how these vertices are connected through edges (see \cite{RW}).

Our study of $\FI$--graphs begins with Kneser graphs.

\begin{example}\label{ex:kneser}
For any fixed $n,r \geq 0$, the Kneser graph $KG_{n,r}$ has vertices indexed by the $r$--element subsets of $[n]$, with edges between two vertices if those subsets are disjoint. 

The $\FI$--graph $KG_{\dt,r}$ has $G_n = KG_{n,r}$. For each injection $f$ from $[m]$ to $[n]$, the corresponding transition map takes the vertex $\{a_1,\dots,a_r\}$ of $G_m$ to the vertex $\{f(a_1),\dots,f(a_r)\}$ of $G_n$.
\end{example}

There are several minor ways in which this construction can be generalized, as in the following examples. 

The vertices could be indexed by ordered $r$--tuples rather than by (unordered) subsets.

\begin{example}\label{ex:kneserordered}
For any fixed $r \geq 0$, define each graph $G_{n}$ to have vertices indexed by the $r$--tuples of elements of $[n]$, with edges between two vertices if no element of $[n]$ appears in both $r$--tuples. For each injection $f$ from $[n]$ to $[m]$, the corresponding transition map takes the vertex $(a_1,\dots,a_r)$ of $G_n$ to the vertex $(f(a_1),\dots,f(a_r))$ of $G_m$.

As with Example \ref{ex:kneser}, these graphs and transition maps form an $\FI$--graph. 
\end{example}

Future examples with the same vertex sets as Example \ref{ex:kneser} or \ref{ex:kneserordered} will use the same transition maps, without this being explicitly stated each time.

Rather than using ordered or unordered $r$--tuples, as in Examples \ref{ex:kneser} and \ref{ex:kneserordered}, it is possible to care about only some of the order data.

\begin{example}\label{ex:kneserpartordered}
For any fixed $r \geq 0$ and subgroup $H$ of the symmetric group $\Sn_r$, define each graph $G_{n}$ to have vertices indexed by orbits of $r$--tuples of elements of $[n]$ under the action of $H$. As in Examples \ref{ex:kneser} and \ref{ex:kneserordered}, edges are placed between each pair of vertices labelled by disjoint $r$--tuples.
\end{example}

Other $\FI$--graphs may be defined with the same vertex sets as in Examples \ref{ex:kneser}, \ref{ex:kneserordered}, or \ref{ex:kneserpartordered}, but with different sets of edges. There may be multiple orbits of edges, and they may only exist from a certain degree onwards.

\begin{example}\label{ex:kneseredgeorbits}
Let the vertex set of $G_n$ be indexed by $r$--element subsets of $[n]$, and let $a_0$ to $a_r$ be positive integers or infinity. In $G_n$, there is an edge between two vertices which share exactly $l$ elements if and only if $n \geq a_l$. 
\end{example}

This example has $r+1$ orbits of pairs of vertices, determined by the size $i$ of the intersection of the two labelling sets. For each of these orbits, there is an edge joining those two vertices from degree $a_i$ onwards.

If the vertices are described by (ordered) $r$--tuples as in Example \ref{ex:kneserordered}, then there are many more orbits of pairs of vertices --- rather than these orbits being defined just by the size of the intersection, they also take into account which positions any equal entries occupy. As in Example \ref{ex:kneseredgeorbits}, though, all that is required is to choose when edges appear in each vertex orbit. In this example, the orbits of pairs of vertices are a little more complicated.

\begin{example}\label{ex:kneserorderededgeorbits}
Let the vertex set of $G_n$ be indexed by $r$--tuples of elements of $[n]$. For each integer $l$ between $0$ and $r$ and each injection $s$ from any $l$--element subset of $[r]$ to $[r]$, fix $a_{ls}$ to be either a nonnegative integer or infinity. Because we are working with undirected graphs, we require that $a_{ls^{-1}} = a_{ls}$.

In $G_n$, there is an edge between two vertices whose labelling $r$--tuples have $l$ entries in common in positions given by $s$ exactly if $n \geq a_{ls}$. 
\end{example}

The number of parameters $a_i$ or $a_{ls}$ required by Examples \ref{ex:kneseredgeorbits} and \ref{ex:kneserorderededgeorbits} is the number of orbits of pairs of vertices, in the sense of the minimal number of pairs of vertices required for any pair of vertices in any degree to be in their image under some transition map. Effectively, for each orbit $i$ of pairs of vertices, we need to decide in which degree $a_i$ pairs of vertices in this orbit are first connected by an edge. Once this happens, all other pairs of vertices in the same orbit in the graph $G_{a_i}$ must also be connected by an edge, and likewise pairs of vertices in the image of these pairs in later graphs $G_r$, for $r > a_i$. 

Disjoint sums of $\FI$--sets are $\FI$--sets, so the preceding examples may be combined to give larger ones. Such a construction will have additional orbits of pairs of vertices, allowing additional edges as in the following example.

\begin{example}\label{ex:twoknesers}
Choose nonnegative integers $r, l, a_{11}, a_{12},$ and $a_{22}$. Each graph $G_n$ has a vertex for each subset of $[n]$ of size $r$ and another vertex for each subset of size $l$. Color these vertices red and blue, respectively. There is an edge between
\begin{itemize}
\item two red vertices if their subsets have intersection of size $a_{11}$
\item a red vertex and a blue vertex if their subsets have intersection of size $a_{12}$
\item two blue vertices if their subsets have intersection of size $a_{22}$
\end{itemize} 
\end{example}

Example \ref{ex:twoknesers} is the disjoint sum of two instances of Example \ref{ex:kneseredgeorbits}, with additional edges added between red vertices and blue vertices. A more general example could be constructed with more parameters, both those used in Example \ref{ex:kneseredgeorbits}, and new parameters for each orbit of pairs of vertices with one red and one blue.

Because the conditions on an $\FI$--graph only involve maps from $G_n$ to $G_m$ with $m \geq n$, an $\FI$--graph may be edited by removing all vertices and edges before a certain point. 

\begin{example}\label{ex:zeromod}
Let $G_\dt$ be an $\FI$--graph. Modify it by replacing each $G_i$ by the empty graph, for $i = 0$ to $k-1$. Transition maps from these graphs are trivial. The resulting object is an $\FI$--graph.
\end{example}

It is not possible to remove all vertices and edges from any graph after a nonempty graph $G_n$, because transition maps to the empty graph cannot be defined. The closest we can come is to crush the entire graph to a point, perhaps with a self-edge. 

\begin{example}\label{ex:onemod}
Let $G_\dt$ be an $\FI$--graph. Modify it by replacing each $G_i$ by a single vertex, for $i \geq k$. If there are any edges in any prior graph, then this vertex must have a self-edge. Transition maps to these single-vertex graphs map every vertex to the only vertex. The resulting object is an $\FI$--graph.
\end{example}

Our desire to in general allow non-injective behavior of the sort described in Example \ref{ex:onemod} is why we allow graphs to have self-edges. If self-edges are forbidden, then this example is only allowed when there are no edges earlier in the $\FI$--graph and in general vertices would only be able to map to the same vertex if they were not connected by an edge.

\subsection{Proof of Theorem~\ref{structurethm}}

We now prove Theorem~\ref{structurethm}. Let $X_{\bullet}$ be a finitely generated $\FI$--set. 
For each $\cO$ in $X_{\bullet}/\Sn$, we get an $\FI$--subset $X(\cO)$ of $X_{\bullet}$ corresponding to the elements which map to $\cO$ under the maps $X_n \longrightarrow X_n/\Sn_n \longrightarrow X_{\bullet}/\Sn$, and we have $X_{\bullet} = \bigsqcup_{\cO} X(\cO)$. 
So it is enough to prove the theorem for each $X(\cO)$. In other words, we may, and do, reduce to the case that $X_{\bullet}/\Sn$ is a singleton.

By Proposition~\ref{EventuallyBijective}, for $n$ large enough, the maps $X_n / \Sn_n \to X_{n+1} / \Sn_{n+1}$ are bijective. 
So, for $n$ large enough, the action of $\Sn_n$ on $X_n$ is transitive.

Choose some $k$ large enough for the action of $\Sn_n$ on $X_n$ to be transitive for all $n \geq k$. Choose some particular element $x \in X_k$. Let $G_k$ be the stabilizer of $x$ in the $\Sn_k$ action on $X_k$.
For all $n \geq k$, let $\iota_{kn}$ be the obvious inclusion of $[k]$ into $[n]$ and let $G_n$ be the stabilizer of $\iota_{kn}(x)$ in $\Sn_n$. 
We want to show that there is a nonnegative integer $m$ and a subgroup $H \subseteq \Sn_m$ such that, for $n$ sufficiently large, the subgroup $G_n$ of $\Sn_n$ is conjugate to $H \times \Sn_{n-m}$.

\begin{lemma} \label{StabilizerGrowth}
For all $n \geq \ell \geq k$, we have $G_{\ell} \times \Sn_{n-\ell} \subseteq G_n$.
\end{lemma}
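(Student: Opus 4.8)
The plan is to reduce this to a concrete statement about how an injection $g \in \Sn_n$ fixing $\iota_{kn}(x)$ interacts with the inclusion of $[\ell]$, and then to unpack what it means for an element of $G_\ell$, extended by the identity on $\{\ell+1, \ldots, n\}$, to stabilize $\iota_{kn}(x)$. First I would set up notation carefully: recall that $\iota_{k\ell}(x) \in X_\ell$ and $\iota_{kn}(x) \in X_n$, and that by functoriality $\iota_{\ell n} \circ \iota_{k\ell} = \iota_{kn}$, so $X(\iota_{\ell n})$ sends $\iota_{k\ell}(x)$ to $\iota_{kn}(x)$. An element of $G_\ell \times \Sn_{n-\ell}$ is a permutation $\tau \in \Sn_n$ of the form $\sigma$ on $[\ell]$ (with $\sigma \in G_\ell \subseteq \Sn_\ell$) and arbitrary on $\{\ell+1,\ldots,n\}$; I want to show $\tau$ fixes $\iota_{kn}(x)$.

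The key step is the naturality square: for the injections $\iota_{\ell n} : [\ell] \hookrightarrow [n]$ and $\sigma \cdot \iota_{\ell n}$ (the same inclusion post-composed with $\tau$, which on the image $[\ell]$ equals $\sigma$), we have a commuting relation coming from the fact that $\tau \circ \iota_{\ell n} = \iota_{\ell n} \circ \sigma$ as injections $[\ell] \to [n]$ — indeed both send $j \in [\ell]$ to $\sigma(j) \in [\ell] \subseteq [n]$, using that $\tau|_{[\ell]} = \sigma$. Applying the functor $X$ to this equality of morphisms gives $X(\tau) \circ X(\iota_{\ell n}) = X(\iota_{\ell n}) \circ X(\sigma)$ as maps $X_\ell \to X_n$. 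Now evaluate both sides at $\iota_{k\ell}(x) \in X_\ell$: the right side gives $X(\iota_{\ell n})\big(X(\sigma)(\iota_{k\ell}(x))\big) = X(\iota_{\ell n})(\iota_{k\ell}(x)) = \iota_{kn}(x)$, where the middle equality uses that $\sigma \in G_\ell$ stabilizes $\iota_{k\ell}(x)$. The left side gives $X(\tau)(\iota_{kn}(x))$. Hence $X(\tau)(\iota_{kn}(x)) = \iota_{kn}(x)$, i.e. $\tau \in G_n$, as desired.

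The only subtlety — and the step I would be most careful about — is the claim that $\tau \circ \iota_{\ell n} = \iota_{\ell n} \circ \sigma$ as morphisms in $\FI$; this is where one must check that $\tau$, being the identity (or anything) on $\{\ell+1, \ldots, n\}$, does not interfere, since $\iota_{\ell n}$ has image exactly $[\ell]$, and on that image $\tau$ restricts to $\sigma$ which maps $[\ell]$ bijectively to $[\ell]$. So the composite $\tau \circ \iota_{\ell n}$ never leaves $[\ell]$ and agrees with $\iota_{\ell n}\circ\sigma$ pointwise. Once this is in hand, the argument above is just functoriality, and there is essentially no remaining obstacle. I would also note in passing that this lemma is the ``easy half'' of the eventual identification $G_n \cong H \times \Sn_{n-m}$; the reverse containment (that $G_n$ is not much bigger) is the real content and will be handled separately.
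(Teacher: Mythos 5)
Your proof is correct and is essentially identical to the one in the paper: both rest on the observation that $(\sigma \times \tau) \circ \iota_{\ell n} = \iota_{\ell n} \circ \sigma$ as $\FI$--morphisms, followed by applying the functor $X$ and evaluating at $\iota_{k\ell}(x)$. The only difference is notational (you call the extended permutation $\tau$ where the paper writes $\sigma \times \tau$), and your extra care about why the two morphisms agree pointwise is a reasonable, if slightly more verbose, way to justify the same identity.
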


\begin{proof}
For any $\sigma \in \Sn_{\ell}$ and $\tau \in \Sn_{n-\ell}$,  we have $(\sigma \times \tau) \circ \iota_{\ell n} = \iota_{\ell n} \circ \sigma$.
Now, suppose $\sigma \in G_m$ so $\sigma (\iota_{k \ell}(x)) = \iota_{k \ell}(x)$. Then 
\[ (\sigma \times \tau) \left( \iota_{kn}(x) \right) = (\sigma \times \tau) \circ \iota_{\ell n} \circ \iota_{k\ell}(x) = \iota_{\ell n} \circ \sigma \left(  \iota_{k\ell}(x) \right) = \iota_{\ell n} \circ \iota_{k\ell}(x) = \iota_{kn}(x). \]
So $\sigma \times \tau$ stabilizes $\iota_{kn}(x)$, and thus lies in $G_n$.
\end{proof}

Define $A_n \subseteq [n]$ to be the orbit of $n$ under $G_n$. 

\begin{lemma} \label{AGrows}
For $n > k$, we have $A_{n+1} \supseteq A_n \cup \{ n+1 \}$.
\end{lemma}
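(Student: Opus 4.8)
The plan is to exhibit, for each $n > k$, a permutation in $G_{n+1}$ which moves $n+1$ into $A_n$, together with the observation that $G_n \subseteq G_{n+1}$ (a special case of Lemma~\ref{StabilizerGrowth} with $\ell = n$), so that the $G_{n+1}$--orbit of $n+1$ contains both $A_n$ and $n+1$. Concretely, let $a \in A_n$; by definition there is some $\sigma \in G_n$ with $\sigma(n) = a$. Lemma~\ref{StabilizerGrowth} gives $G_n \times \Sn_1 \subseteq G_{n+1}$, so in particular $G_n \subseteq G_{n+1}$, and hence $a = \sigma(n) \in A_{n+1}$; this already shows $A_n \subseteq A_{n+1}$. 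It remains to show $n+1 \in A_{n+1}$, i.e.\ that some element of $G_{n+1}$ carries some element of $[n+1]$ to $n+1$, and the natural candidate is a transposition-type element swapping $n$ and $n+1$ while fixing $\iota_{k,n+1}(x)$.

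The key point is that the transposition $(n \ \ n+1) \in \Sn_{n+1}$ fixes $\iota_{k,n+1}(x)$ whenever $n+1 > k$: indeed $\iota_{k,n+1}(x) = X(\iota_{k,n+1})(x)$ lies in the image of $X_k$, and since $n$ and $n+1$ both lie outside $[k]$, the transposition $(n\ \ n+1)$ agrees with the identity on $[k]$, so it commutes with $\iota_{k,n+1}$ in the sense that $(n\ \ n+1) \circ \iota_{k,n+1} = \iota_{k,n+1}$ as $\FI$--morphisms; applying $X$ shows $(n\ \ n+1)$ stabilizes $\iota_{k,n+1}(x)$. Hence $(n\ \ n+1) \in G_{n+1}$. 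Then $(n\ \ n+1)(n) = n+1$, and since $n \in A_n \subseteq A_{n+1}$ and $A_{n+1}$ is a $G_{n+1}$--orbit, we conclude $n+1 \in A_{n+1}$. Combining, $A_{n+1} \supseteq A_n \cup \{n+1\}$.

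I expect the only mild subtlety — and the step to state carefully — is the claim that $(n\ \ n+1)$ fixes $\iota_{k,n+1}(x)$; everything else is bookkeeping with orbits and the already-established Lemma~\ref{StabilizerGrowth}. One should double-check the degenerate possibility that $A_n$ fails to contain $n$ (which cannot happen, since $A_n$ is by definition the orbit of $n$), and the hypothesis $n > k$ is exactly what guarantees both $n, n+1 \notin [k]$ so that the transposition argument goes through. If one prefers to avoid referring to the transposition directly, an equivalent formulation is that the inclusion $[n] \hookrightarrow [n+1]$ sending $i \mapsto i$ for $i < n$ and $n \mapsto n+1$ differs from $\iota_{n,n+1}$ by an element of $\Sn_{n+1}$ fixing $x$, and then transport of structure gives the result; but the transposition phrasing is cleanest.
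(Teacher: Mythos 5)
Your proof has all the right ingredients --- the direct verification that $(n\ n\!+\!1)$ fixes $\iota_{k,n+1}(x)$ because it acts as the identity on $[k]$, and the inclusion $G_n \subseteq G_{n+1}$ from Lemma~\ref{StabilizerGrowth} --- and these are exactly the two facts the paper combines. But the logical scaffolding as written is circular. Your first step asserts that $\sigma \in G_n \subseteq G_{n+1}$ with $\sigma(n) = a$ ``already shows $A_n \subseteq A_{n+1}$.'' That inference is not yet available: $A_{n+1}$ is by definition the $G_{n+1}$--orbit of $n+1$, not of $n$, so knowing $a$ is in the same $G_{n+1}$--orbit as $n$ only puts $a$ in $A_{n+1}$ once you know $n \in A_{n+1}$. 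You then announce that ``it remains to show $n+1 \in A_{n+1}$,'' but that is automatic from the definition (an element always lies in its own orbit) --- what actually needs proving is $n \in A_{n+1}$. Finally, your last sentence derives $n+1 \in A_{n+1}$ from ``$n \in A_n \subseteq A_{n+1}$,'' citing the very inclusion that had the gap.

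The repair is just a reordering, and your transposition computation is precisely the missing bridge. Argue: (i) $n+1 \in A_{n+1}$ by definition; (ii) $(n\ n\!+\!1) \in G_{n+1}$ by your direct check, so $n = (n\ n\!+\!1)(n+1)$ lies in the $G_{n+1}$--orbit of $n+1$, i.e.\ $n \in A_{n+1}$; (iii) now $A_{n+1}$ is the $G_{n+1}$--orbit of $n$, and since $G_n \subseteq G_{n+1}$ the $G_n$--orbit $A_n$ of $n$ is contained in it. This is essentially the paper's proof; the only stylistic difference is that the paper obtains the transposition from Lemma~\ref{StabilizerGrowth} (via $G_{n-2} \times \Sn_2 \subseteq G_{n+1}$) while you verify it directly from the identity $(n\ n\!+\!1) \circ \iota_{k,n+1} = \iota_{k,n+1}$. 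Your direct check is in fact cleaner, since it visibly needs only $n > k$ rather than $n - 2 \geq k$.
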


We remark that the statement is meaningful for $k=n$ but need not be true in that case.

\begin{proof}
By definition, $n+1 \in A_{n+1}$. So the task is to show that $A_n \subset A_{n+1}$.

By Lemma~\ref{StabilizerGrowth}, $G_{n+1}$ contains $G_{n-2} \times \Sn_2$ and, in particular, contains the transposition $(n \ n+1)$. So $n+1$ and $n$ are in the same $\Sn_{n+1}$ orbit and $n \in A_{n+1}$. But also by Lemma~\ref{StabilizerGrowth}, $G_{n+1}$ contains $G_n \times \{ e \}$. So the $G_{n+1}$ orbit of $n$ contains the $G_n$ orbit of $n$. In other words, $A_n \subset A_{n+1}$ as required.
\end{proof}

Let $B_n = [n] \setminus A_n$. Then Lemma~\ref{AGrows} shows that $[k] \supseteq B_{k+1} \supseteq B_{k+2} \supseteq B_{k+3} \supseteq \cdots$.
For $n$ large enough, therefore, the subset $B_n$ stabilizes at some subset $B$ of $[k]$. Let $m = |B|$. 

For a subset $P$ of $[n]$, let $\Sn_P$ be the subgroup of $\Sn_n$ which fixes all elements of $[n] \setminus P$.
Here is our final, key, lemma:

\begin{lemma} \label{KeyLemma}
Let $n$ be large enough that $2(n-k) > (n-m)$ and $|A_n| = n-m$. Then $\Sn_{A_n} \subseteq G_n$.
\end{lemma}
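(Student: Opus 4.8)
\textbf{Proof proposal for Lemma~\ref{KeyLemma}.}

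The plan is to show that every transposition $(a\ b)$ with $a,b \in A_n$ lies in $G_n$; since such transpositions generate $\Sn_{A_n}$, this suffices. Fix $a, b \in A_n$. The key structural input is Lemma~\ref{StabilizerGrowth}, which tells us that $G_{\ell} \times \Sn_{n - \ell} \subseteq G_n$ for every $\ell$ with $k \leq \ell \leq n$. Taking $\ell = k$ gives $\Sn_{[n] \setminus [k]} \subseteq G_n$, so $G_n$ already contains every transposition of two elements both lying in $[n]\setminus [k]$ --- and in particular $G_n$ acts transitively on $[n] \setminus [k]$, which is a subset of $A_n$ of size $n - k$.

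First I would use a counting/pigeonhole argument to produce a single element of $[n]\setminus[k]$ (equivalently of $A_n$) that can be moved onto any prescribed element of $A_n$ by an element of $G_n$. The orbit of $n$ under $G_n$ is all of $A_n$ by definition, so for any $c \in A_n$ there is $g \in G_n$ with $g(n) = c$. Now I want to upgrade this to: for any $a \in A_n$ there is $g_a \in G_n$ with $g_a(a) \in [n]\setminus [k]$. Here is where the hypothesis $2(n-k) > n-m$ enters. We have $|A_n| = n-m$ and $|[n]\setminus[k]| = n-k$; since $G_n \supseteq G_{k} \times \Sn_{n-k}$ acts on $A_n$ with $[n]\setminus[k]$ a single orbit of size $n-k$, and $2(n-k) > n - m = |A_n|$, any element $a \in A_n$ has the property that its $G_n$-orbit, having size a multiple of... more precisely: the $G_n$-orbit of $a$ is all of $A_n$ (the action of $G_n$ on $A_n = $ orbit of $n$ is transitive since $a, n$ are both in the $G_n$-orbit of $n$), so certainly there is $g_a \in G_n$ with $g_a(a) \in [n]\setminus[k]$. (Transitivity already gives this without the counting hypothesis; the hypothesis will instead be used in the conjugation step below to guarantee we can move $a$ and $b$ \emph{simultaneously} into $[n]\setminus[k]$ via a single group element, by ensuring $[n]\setminus[k]$ is large enough that it meets the relevant orbits with room to spare.)

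Next I would conjugate. Given $a, b \in A_n$, choose (using transitivity of $G_n$ on $A_n$ together with the size inequality $2(n-k) > n-m$, which forces any two-element subset of $A_n$ to be movable into the $(n-k)$-element subset $[n]\setminus[k]$ by a single $g \in G_n$) an element $g \in G_n$ with $g(a) = a'$ and $g(b) = b'$ both in $[n]\setminus[k]$. Then $(a'\ b') \in \Sn_{[n]\setminus[k]} \subseteq G_n$ by Lemma~\ref{StabilizerGrowth} with $\ell = k$, and hence
\[
(a\ b) = g^{-1}\,(a'\ b')\,g \in G_n ,
\]
since $G_n$ is a subgroup and $g, (a'\ b') \in G_n$. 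As $a, b$ were arbitrary elements of $A_n$ and transpositions generate the symmetric group, we conclude $\Sn_{A_n} \subseteq G_n$.

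I expect the main obstacle to be the simultaneous-move step: verifying cleanly that the inequality $2(n-k) > n-m$ is exactly what is needed to move an arbitrary pair $\{a,b\} \subseteq A_n$ into $[n]\setminus[k]$ by one element of $G_n$. The point is that the complement $A_n \setminus ([n]\setminus[k])$ has size $(n-m) - (n-k) = k - m < n-k$, so it is strictly smaller than $[n]\setminus[k]$; combined with transitivity of $G_n$ on $A_n$, this slack is enough to reposition any two chosen points away from the small "bad" part $A_n \cap [k]$ and into the large orbit $[n]\setminus[k]$ on which $G_n$ already contains all transpositions. Making this last combinatorial claim precise --- perhaps by first moving $a$ into $[n]\setminus[k]$, then moving $b$ within $A_n$ by an element of the stabilizer of $a'$, using that this stabilizer still acts with large enough orbits --- is the one place where care is required; everything else is formal.
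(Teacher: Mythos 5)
Your strategy --- reduce to showing every transposition $(a\ b)$ with $a,b \in A_n$ lies in $G_n$, then conjugate a transposition of $\Sn_{[k+1,n]}$ by a suitable $g \in G_n$ moving $\{a,b\}$ into $[k+1,n]$ --- is in the same spirit as the paper's proof, and the underlying idea can be made to work. But the crucial ``simultaneous move'' claim is left unproven, and the strategy you propose for filling it (move $a$ into $[k+1,n]$ first, then move the image of $b$ using the stabilizer of $g(a)$) does not obviously go through: if $g(b)$ lands in $A_n \cap [k]$, the only subgroup of that stabilizer you control from Lemma~\ref{StabilizerGrowth} is $\Sn_{[k+1,n]\setminus\{g(a)\}}$, which fixes $[k]$ pointwise and so cannot move $g(b)$. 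You would need to know something about the stabilizer beyond this, and nothing in the setup gives it to you directly. A clean way to close the gap is a counting argument: since $G_n$ acts transitively on $A_n$ and $|[k+1,n]|/|A_n| = (n-k)/(n-m) > 1/2$, for each fixed $c \in A_n$ more than half the elements $g \in G_n$ satisfy $g(c) \in [k+1,n]$; by inclusion--exclusion some $g$ sends both $a$ and $b$ into $[k+1,n]$.

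The paper avoids the simultaneous-move claim entirely and is shorter. It shows $(a\ n) \in G_n$ for every $a \in A_n$ (these already generate $\Sn_{A_n}$). Pick $\rho \in G_n$ with $\rho(n) = a$. Then $\Sn_{[k+1,n]} \subseteq G_n$ and, by conjugation, $\Sn_{\rho([k+1,n])} \subseteq G_n$. Both $[k+1,n]$ and $\rho([k+1,n])$ lie in $A_n$ and each has size $n-k > |A_n|/2$, so they intersect; pick $b$ in the intersection. Then $(b\ n) \in \Sn_{[k+1,n]}$ and $(a\ b) \in \Sn_{\rho([k+1,n])}$ are both in $G_n$, so $(a\ n) = (a\ b)(b\ n)(a\ b) \in G_n$. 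This uses the pigeonhole $2(n-k) > n-m$ in a single, explicit place, with no need to produce one group element relocating two points at once. Your approach is salvageable, but as written it has a real gap at exactly the step you flagged, and the fix you sketched is not the right one.
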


\begin{proof}
Let $a \in A_n$. It is enough to show that $G_n$ contains the transposition $(a\ n)$.

Let $[k+1, n] = \{ k+1, k+2, \ldots, n \}$. By Lemma~\ref{StabilizerGrowth}, we have $\Sn_{[k+1, n]} \subseteq \Sn_n$.
Since $a \in A_n$, there is some element $\rho \in G_n$ mapping $n$ to $a$. Then $\rho \Sn_{[k+1, n ]} \rho^{-1} = \Sn_{\rho([k+1, n])}$ is in $G_n$ as well.
We have $[k+1, n] \subseteq A_n$, so $\rho([k+1, n]) \subseteq A_n$ and, since $2(n-k) > n-m = |A_n|$, the sets $[k+1, n]$ and $\rho([k+1, n])$ must overlap. So there is some $b \in [k+1, n] \cap \rho([k+1, n])$. Then the transpositions $(b \ n)$ and $(a \ b)$ are in $G_n$, so the transposition $(a\ b) (b \ n) (a\ b) = (a \ n)$ is as well.
\end{proof}

We are now ready to finish the proof. For $n$ large enough that Lemma~\ref{KeyLemma} holds, we know that $\Sn_{A_n} \subseteq G_n \subseteq \Sn_{A_n} \times \Sn_{B}$. This means that $G_n$ must be of the form $\Sn_{A_n} \times H_n$ for some subgroup $H^{(n)}$ of $\Sn_B$. Moreover, by Lemma~\ref{KeyLemma}, we have $H_n \subseteq H_{n+1} \subseteq H_{n+2} \subseteq \cdots \subseteq \Sn_B \cong \Sn_m$ so, for $n$ large enough, the subgroup $H_n$ stabilizes. We take $H$ to be this stable limit. \qedsymbol

\begin{remark}
\label{rem:stabgroupbehavior}
The results of this section actually constrain the behavior of the stabilizer groups $G_n$ quite severely, even when $n$ is not yet `large enough'. As we move from $X_n$ to $X_{n+1}$, the groups $G_n$ and $G_{n+1}$ are related in one of the following ways.
\begin{itemize}
\item It may be that $\Sn_{A_{n+1}} \subseteq G_{n+1}$, in which case the subgroup $H^{(n+1)}$ of $\Sn_{B_{n+1}}$ contains the intersection $H^{(n)} \cap \Sn_{B_{n+1}}$, bearing in mind that $B_{n+1}$ may be smaller than $B_n$.
\item Alternatively, it is possible that $G_{n+1}$ does not contain $\Sn_{A_{n+1}}$. This can only happen when the hypotheses of Lemma \ref{KeyLemma} are not yet satisfied. If this happens, then $\Sn_{A_{n+1} \cup \{n+2\}}$ is contained in $G_{n+2}$. Example \ref{ex:wreath} gives an example of this behavior.
\end{itemize}
\end{remark} 

The second case of Remark \ref{rem:stabgroupbehavior} is why Lemma \ref{KeyLemma} requires that $2(n-k) > (n-m)$. The following example illustrates what may happen when $n$ is not yet this large.

\begin{example}
\label{ex:wreath}
Let $X_n$ be empty for $n < 5$. Take the groups $G_n$ for $n \geq 5$ to be
\begin{itemize}
\item Generated by the symmetric groups $\Sn_{2}$ acting on $[2]$, $\Sn_3$ acting on $\{3,4,5\}$ and $S_{n-5}$ acting on $[n] \backslash [5]$ for $n \in \{5,6,7\}$
\item Generated by $\Sn_{2}$ acting on $[2]$, $\Sn_3$ acting on $\{3,4,5\}$, $\Sn_{3}$ acting on $\{6,7,8\}$, and the permutation $(3 \: 6)(4 \: 7)(5 \: 8)$ for $n = 8$.
\item Generated by $\Sn_{2}$ acting on $[2]$ and $\Sn_{n-2}$ acting on $[n] \backslash [2]$ for $n > 8$
\end{itemize}
Observe the failure of Lemma \ref{KeyLemma} for $G_8$. The orbit $A_8$ is $\{3,4,5,6,7,8\}$, but not all of $\Sn_{A_8}$ is contained in $G_8$. We do not give a complete construction of an $\FI$--set with these stabilizer groups --- the vertices may be taken to be appropriate cosets of the groups $G_n$.
\end{example}

The gist of Remark \ref{rem:stabgroupbehavior} and Example \ref{ex:wreath} is that to go from $G_n$ to $G_{n+1}$, one may remove elements from $B_n$ or increase the subgroup $H^{(n)}$. When $n$ is small, it is also possible to have a wreath product factor appear in $G_{n+1}$. This factor is temporary, in that it will always further increase to a large symmetric group in $G_{n+2}$.

\section{Relations of \texorpdfstring{$\FI$}{FI}--sets}

\subsection{Elementary definitions and properties}

In this section we turn our attention to relations defined by $\FI$--sets.  We recall the definition from the introduction:

\begin{definition}
Let $X_\dt$ and $Y_\dt$ denote two $\FI$--sets. Then a \textbf{relation between $X_\dt$ and $Y_\dt$} is an $\FI$--subset of the product $(X \times Y)_\dt$.  
\end{definition}

In Section~\ref{examples} we examined a large collection of examples of $\FI$--sets and relations.

\begin{proposition}
Let $X_\dt$, $Y_\dt$ be finitely generated $\FI$--sets, and let $R_\dt$ be a relation between $X_\dt$ and $Y_\dt$. Then $R_\dt$ is finitely generated.
\end{proposition}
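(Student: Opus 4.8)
The plan is to show that $R_\dt$, being a subset of $(X\times Y)_\dt$, inherits finite generation from the ambient $\FI$--set. First I would observe that if $X_\dt$ and $Y_\dt$ are finitely generated $\FI$--sets, then so is the product $(X\times Y)_\dt$: this follows by passing to linearizations, since $k(X\times Y)_\dt \cong kX_\dt \otimes kY_\dt$ as $\FI$--modules, and Proposition~\ref{tensorgen} guarantees that the tensor product of finitely generated $\FI$--modules is finitely generated (indeed in degree $\leq d+e$ if $X_\dt$ is generated in degree $\leq d$ and $Y_\dt$ in degree $\leq e$). Since an $\FI$--set is finitely generated precisely when its linearization is, this gives that $(X\times Y)_\dt$ is a finitely generated $\FI$--set.

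Next I would reduce the statement about $R_\dt$ to a Noetherianity statement for $\FI$--sets: namely, that an $\FI$--subset of a finitely generated $\FI$--set is itself finitely generated. To prove this, I would again linearize. The inclusion $R_\dt \hookrightarrow (X\times Y)_\dt$ induces an injection of $\FI$--modules $kR_\dt \hookrightarrow k(X\times Y)_\dt$ (the basis vectors $e_r$ for $r\in R_n$ form a subset of the basis of $k(X\times Y)_n$, and this is respected by transition maps, so $kR_\dt$ is a submodule). Since $k(X\times Y)_\dt$ is a finitely generated $\FI$--module over the field $k$, which is in particular Noetherian, the Noetherian property quoted earlier (Snowden; Church--Ellenberg--Farb--Nagpal) implies that the submodule $kR_\dt$ is finitely generated as an $\FI$--module. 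Translating back through the equivalence ``$X_\dt$ finitely generated $\iff kX_\dt$ finitely generated,'' we conclude $R_\dt$ is a finitely generated $\FI$--set.

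One subtlety worth flagging: I should check that the generation \emph{degree} behaves as expected, i.e. that a generating set for $kR_\dt$ can be taken among the standard basis vectors $e_r$ rather than arbitrary linear combinations. This is automatic here because $kR_\dt$ is a monomial submodule: it is spanned, degreewise, by a sub-collection of the distinguished basis, so any finite generating set of basis vectors for the module over the relevant degrees pulls back to a finite generating subset of $\bigsqcup_i R_i$ for the $\FI$--set. Concretely, if $kR_\dt$ is finitely generated in degree $\leq N$ as an $\FI$--module, then the finitely many basis vectors $e_r$ with $r \in \bigsqcup_{i\le N} R_i$ already generate it, and hence those $r$ generate $R_\dt$ as an $\FI$--set.

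I do not anticipate a serious obstacle; the only real content is assembling the two imported black boxes (Proposition~\ref{tensorgen} for the product, and the Noetherian property for the submodule) and being careful that the ``set $\leftrightarrow$ module'' dictionary preserves finite generation in both directions, which has already been noted in the text immediately after the definition of linearization of $\FI$--sets. If anything, the mild bookkeeping point is the monomiality remark above, ensuring the module-theoretic generating set can be chosen to consist of basis elements; this is straightforward since submodules of linearizations of $\FI$--sets generated by subsets of the basis are themselves spanned by basis elements in each degree.
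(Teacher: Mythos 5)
Your proof is correct and follows the paper's argument essentially verbatim: linearize, use Proposition~\ref{tensorgen} to show $k(X\times Y)_\dt$ is a finitely generated $\FI$--module, apply Noetherianity to the submodule $kR_\dt$, and translate back via the ``set $\leftrightarrow$ module'' equivalence. The monomiality remark you flag is a reasonable justification of the dictionary, which the paper records without proof in the sentence immediately following the definition of linearization.
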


\begin{proof}
It suffices to prove that the linearization $k R_\dt$ is finitely generated (for any choice of $k$). It is easily seen that
\[
k(X \times Y)_\dt \cong kX_\dt \otimes kY_\dt.
\]
Proposition~\ref{tensorgen} implies that $kX_\dt \otimes kY_\dt$ is finitely generated, whence the same is true of $k(X\times Y)_\dt$. The proposition now follows from the Noetherian property.
\end{proof}

%

\begin{example}
Let $G_\dt$ denote an $\FI$--graph (see \cite{RW}). Then the $\FI$--set encoding the edges of $G_\dt$, $E(G_\dt)$, can be viewed as a symmetric relation between the vertex $\FI$--set and itself. In fact, understanding properties of $E(G_\dt)$ is one of the main motivations of the present work. Theorem \ref{relationthm} can be seen as a vast generalization of Theorem H of \cite{RW} (see Section \ref{mainapp}). Section~\ref{examples} focused on giving a large collection of examples of specific $\FI$--graphs.
\end{example}

\begin{example}
Let $P_\dt$ denote an $\FI$--poset with partial orderings $\leq_\dt$ (see \cite{G}). Then one has an $\FI$--relation defined by
\[
R_n = \{(x,y) \mid x \leq_n y\}.
\]
$\FI$--posets were used by Gadish in \cite{G}, where he showed that they have a variety of applications in studying representation stability phenomena arising from linear arrangements.
\end{example}

Given a relation $R_\dt$ between two $\FI$--sets $X_\dt$ and $Y_\dt$ one may associated a collection of maps $r_n: \ kX_n \rightarrow kY_n$ for any choice of $k$. Namely, for any $x \in X_n$,
\[
r_n(e_x) = \sum_{(x,y) \in R_n} e_y.
\]
Critically, the collection $\{r_n\}_n$ does not necessarily extend to a morphism of $\FI$--modules $k[X_\dt] \rightarrow k[Y_\dt]$. Despite this fact, we want to prove the maps $r_n$  display a regularity as $n$ varies.

\begin{example}
Once again let $G_\dt$ be an $\FI$--graph, and assume that the  vertex sets of $G_\dt$ are finitely generated. If we chose our relation to be the edge relation, then the associated maps $r_n: \Q V(G_n)  \rightarrow \Q V(G_n)$ are the adjacency matrices of the associated graphs. These maps are studied in~\cite{RW}, where it is pointed out that they usually do not form a morphism of $\FI$--modules.
\end{example}

\begin{example}
If we assume that $P_\dt$ is an $\FI$--poset, then the associated maps $r_n$ are sometimes called the incidence matrix of the poset $P_n$. These are the matrices with rows and columns indexed by elements of $P_n$ which have a $1$ in position $(x,y)$ whenever $x \leq y$, and a $0$ otherwise. Note that unlike in the previous case, this matrix is not (necessarily) symmetric. The inverse of $r_n$ is the M\"obius function of the poset $r_n$.
\end{example}

\subsection{A key diagram}\label{keydiagram}

In this section, we begin to detail the main construction used in the proof of Theorem \ref{relationthm}. This construction does not make use of the $\FI$--set structure in its early stages, and can be accomplished at the level of $\FI$--modules. In the next section, we will specialize to the $\FI$--set case, and complete the proof of Theorem \ref{relationthm}

Let $V_{\bullet}$ be a finitely generated $\FI$--module and $\lambda$ a partition. As explained in Section~\ref{defineLambdaComponent}, we define $V_{\lambda}$ to be $\lim_{n \to \infty} \Hom_{\FI}(M(\lambda[n]), V_{\bullet})$. As we showed there, for $n$ sufficiently large, the maps in this inductive limit are isomorphisms, so $V_{\lambda}$ is canonically isomorphic to $(V_n)_{\lambda[n]}$ for any sufficiently large $n$. Any $n$ which is sufficiently large for this purpose will be said to be \textbf{in the stable range}. We recall that the definition of $V_{\lambda}$ required fixing once and for all embeddings $\Sp(\lambda[n]) \into \Ind_m^{n} \Sp(\lambda[m])$; we will use those same embeddings throughout this section.

By Frobenius reciprocity, the inclusion $\Sp(\lambda[n]) \into \Ind_m^{n} \Sp(\lambda[m])$ corresponds to an inclusion $\Sp(\lambda[m]) \into \Res^n_m \Sp(\lambda[n])$. We'll denote this inclusion $\eta_{m,n}$. 

As in Section~\ref{defineLambdaComponent}, we define $V^{\geq \lambda}_n$ to be the subrepresentation of $V_n$  spanned by the $\mu[n]$--isotypic pieces, where $|\mu| \geq |\lambda|$. We define $V^{> \lambda}_n$ to be the subrepresentation of $V^{\geq \lambda}_n$ spanned by the $\mu[n]$--isotypic components with $\mu \neq \lambda$.  As observed in Section~\ref{defineLambdaComponent}, the vector spaces $V^{\geq \lambda}_n$ form a sub-$\FI$--module of $V_{\bullet}$, and the $V^{>\lambda}$ form a sub-$\FI$--module of those. 

Let $\iota_{m,n}: [m] \to [n]$ be the standard inclusion $r \mapsto r$. Then we have transition maps $(\iota_{n,m})_{\ast}: V^{\geq \lambda}_m \to V^{\geq \lambda}_n$ and  $(\iota_{n,m})_{\ast}: V^{> \lambda}_m \to V^{> \lambda}_n$ and hence we have a map on the subquotients $(\iota_{n,m})_{\ast}: V^{\geq \lambda}_m/V^{>\lambda}_m \to V^{\geq \lambda}_n/V^{>\lambda}_n$

Let $V_{\bullet}$ and $W_{\bullet}$ be two finitely generated $\FI$--modules and suppose that, for all $n$, we have a map $r_n : V_n \to W_n$ of $\Sn_n$ representations. Then, by Schur's lemma, the $r_n$ induce linear maps $(V_n)_{\lambda[n]} \to (W_n)_{\lambda[n]}$ and hence, for $n$ sufficiently large, induce maps $V_{\lambda} \to W_{\lambda}$. Our subject in this section is how to compute those maps. We abbreviate $A = V_{\lambda}$ and $B = W_{\lambda}$. Finally, we recall the notation $\alpha_{\lambda[n]}$ for the inclusion $A \otimes \Sp(\lambda[n]) \to V_n$ and $\beta_{\lambda[n]}$ for the surjection $W_n \to B \otimes \Sp(\lambda[n])$.

The key technical lemma of this section is the following.

\begin{lemma}\label{comdi}
With $V_\bullet,W_\bullet,\lambda$ as above, and with $m \leq n \leq q$ in the stable range, all four maps obtained through composition from $A \otimes \Sp(\lambda[m])$ to $B \otimes \Sp(\lambda[q])$ in the following diagram are equal.
\begin{eqnarray}
\begin{CD}
@. @. W_q @>>\beta_{\lambda[q]}> B \otimes \Sp(\lambda[q])\\
@. @. @AA(\iota_{n,q}))_\ast A             @AA \mathrm{Id} \otimes \eta_{n,q}A\\
A \otimes \Sp(\lambda[n]) @>\alpha_{\lambda[n]}>> V_n @>>r_n> W_n @>>\beta_{\lambda[n]}> B \otimes\Sp(\lambda[n])\\
@AA \mathrm{Id} \otimes  \eta_{m,n}A @AA(\iota_{m,n})_\ast A\\
A \otimes \Sp(\lambda[m]) @>\alpha_{\lambda[m]}>> V_m @. @.
\end{CD}\label{cd}
\end{eqnarray}
\end{lemma}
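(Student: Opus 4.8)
The diagram has three small squares/rectangles and the claim is that all four composite maps $A \otimes \Sp(\lambda[m]) \to B \otimes \Sp(\lambda[q])$ agree. The strategy is to verify commutativity of each of the three constituent cells separately; the outer equality then follows by pasting. Concretely, I would break the diagram into: (i) the bottom-left square involving $\alpha_{\lambda[m]}, \alpha_{\lambda[n]}, \mathrm{Id}\otimes\eta_{m,n}$, and $(\iota_{m,n})_\ast$; (ii) the middle horizontal strip $A\otimes\Sp(\lambda[n]) \xrightarrow{\alpha_{\lambda[n]}} V_n \xrightarrow{r_n} W_n \xrightarrow{\beta_{\lambda[n]}} B\otimes\Sp(\lambda[n])$, which is essentially the \emph{definition} of the induced map $r_{n,\lambda}\colon A\to B$ tensored with $\mathrm{Id}_{\Sp(\lambda[n])}$, together with the analogous strip at level $q$; and (iii) the top-right square involving $\beta_{\lambda[n]}, \beta_{\lambda[q]}, (\iota_{n,q})_\ast$, and $\mathrm{Id}\otimes\eta_{n,q}$.

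\textbf{Key steps.} First I would nail down the compatibility square (i): it says that the transition map $(\iota_{m,n})_\ast\colon V_m \to V_n$, precomposed with the inclusion of the $\lambda[m]$-isotypic piece and followed by projection to the $\lambda[n]$-isotypic piece, is exactly $\mathrm{Id}_A \otimes \eta_{m,n}$ under the identifications $(V_m)_{\lambda[m]}\cong A \cong (V_n)_{\lambda[n]}$. This is precisely the content of how $V_\lambda$ was defined as the inductive limit $\lim_n \Hom_{\FI}(M(\lambda[n]),V)$ in Section~\ref{defineLambdaComponent}, combined with the first paragraph of the proof of Theorem~\ref{stabilization} (the transition maps kill all isotypic pieces $\Sp(\mu[n])$ with $\mu \neq \lambda$, so the only surviving component of $(\iota_{m,n})_\ast$ on the $\lambda$-isotypic part is the map induced by the fixed embeddings $\eta_{m,n}$). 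So (i) commutes essentially by the choice of normalizing scalars. The square (iii) is the image-side mirror of (i): applying the same reasoning to $W_\bullet$ shows $\beta_{\lambda[q]} \circ (\iota_{n,q})_\ast = (\mathrm{Id}_B \otimes \eta_{n,q}) \circ \beta_{\lambda[n]}$, again by Schur's lemma ruling out cross-terms and the normalization fixing the scalar. For the middle strip (ii), $r_n$ is $\Sn_n$-equivariant, hence by Schur's lemma it respects the isotypic decomposition, so $\beta_{\lambda[n]} \circ r_n \circ \alpha_{\lambda[n]} = r_{n,\lambda} \otimes \mathrm{Id}_{\Sp(\lambda[n])}$ where $r_{n,\lambda}\colon A \to B$; crucially, for $n$ in the stable range this $r_{n,\lambda}$ is independent of $n$ — this is exactly what was established at the end of Section~\ref{defineLambdaComponent} (the maps $r_{n,\lambda}$ all identify with a single map $V_\lambda \to W_\lambda$). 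Finally, pasting: the two "long way around" composites and the one "through $V_m, W_n$" composite and the one "through $V_n, W_q$" composite are each obtained by gluing two of the three commuting cells, so they all equal $(r_\lambda \otimes \mathrm{Id}) \circ (\mathrm{Id}_A \otimes \eta_{m,q})$ after composing $\eta_{m,n}$ with $\eta_{n,q}$.

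\textbf{Main obstacle.} The genuinely substantive point is squares (i) and (iii): one must argue that the transition map on the full module $V_\bullet$, when cut down to the $\lambda$-isotypic components at the two ends, reduces exactly to $\mathrm{Id}_A \otimes \eta_{m,n}$ with \emph{no extra scalar}. The "no cross-terms" half is immediate from Schur's lemma plus the Pieri-rule observation from the proof of Theorem~\ref{stabilization}, but matching the scalar requires carefully unwinding the Yoneda identification $\Hom_{\FI}(M(\lambda[n]),V) \cong (V_n)_{\lambda[n]}$ of Remark~\ref{yoneda} against the chosen embeddings $\Sp(\lambda[n]) \into \Ind \Sp(\lambda[m])$ and their Frobenius-reciprocity partners $\eta_{m,n}$, and checking that the map $\Hom_{\FI}(M(\lambda[m]),V) \to \Hom_{\FI}(M(\lambda[n]),V)$ appearing in the inductive system corresponds to postcomposition-with-transition-map on the isotypic side. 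This is bookkeeping rather than a deep difficulty, but it is where an error would hide, so I would write it out in full. Everything else — $\Sn_n$-equivariance of $r_n$, Schur's lemma, and the pasting of commuting cells — is formal.
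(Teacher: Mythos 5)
Your plan is built around the claim that the two inner squares of the diagram commute, and then you paste. But the paper's Remark immediately after the lemma explicitly warns that the two squares need \emph{not} commute, and this is the whole point of the lemma. The bottom-left square fails to commute because $(\iota_{m,n})_\ast \circ \alpha_{\lambda[m]}$ sends the $\lambda[m]$--isotypic piece of $V_m$ into $V_n^{\geq\lambda}$, not into the $\lambda[n]$--isotypic piece alone: by the Pieri rule, the transition map from the $\lambda[m]$--isotypic component can and generally does produce nonzero output in $\mu[n]$--isotypic pieces with $|\mu| > |\lambda|$. (The Pieri observation in the proof of Theorem~\ref{stabilization} only rules out the direction $|\mu| < |\lambda|$.) Meanwhile $\alpha_{\lambda[n]} \circ (\mathrm{Id} \otimes \eta_{m,n})$ lands squarely in the $\lambda[n]$--isotypic piece. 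So there genuinely are cross-terms, contrary to what you assert in your ``no cross-terms'' step, and the two maps out of the lower-left corner are different elements of $V_n$. The same failure mode affects the upper-right square: $\beta_{\lambda[q]} \circ (\iota_{n,q})_\ast$ does not annihilate the $\mu[n]$--isotypic pieces with $|\mu| < |\lambda|$, whereas $(\mathrm{Id} \otimes \eta_{n,q}) \circ \beta_{\lambda[n]}$ does.

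You actually state the correct projected identity in your ``main obstacle'' paragraph --- namely that $\beta_{\lambda[n]} \circ (\iota_{m,n})_\ast \circ \alpha_{\lambda[m]} = \mathrm{Id}_A \otimes \eta_{m,n}$ --- but this is weaker than commutativity of the square as drawn, and a pasting argument cannot be run from it directly. What is missing is the argument the paper actually makes: (a) the two branches of the bottom-left square agree \emph{modulo} $V_n^{>\lambda}$; (b) $r_n$, $(\iota_{n,q})_\ast$, $\beta_{\lambda[n]}$, and $\beta_{\lambda[q]}$ all respect the $\geq\lambda$ filtration, with $\beta_{\lambda[\cdot]}$ annihilating the $>\lambda$ part, so the discrepancy from (a) dies under any of the remaining compositions to $B \otimes \Sp(\lambda[q])$; and symmetrically (c) the two branches of the upper-right square agree on $W_n^{\geq\lambda}$, and every composite path from the lower-left corner lands in $W_n^{\geq\lambda}$ before reaching $W_n$. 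Replacing your ``each cell commutes, then paste'' with this filtration argument would fix the proof.
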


\begin{remark}
The two squares in the diagram (\ref{cd}) need \emph{not} be  commutative.
\end{remark}

\begin{proof}
The two maps arising from the bottom left square are equal modulo $V^{> \lambda}_n$. On the other hand, $r_n$ maps $V^{> \lambda}_n$ to $W^{> \lambda}_n$, which is annihilated by $\beta_{\lambda[n]}$, and is mapped to $W^{> \lambda}_q$ by $(\iota_{n,q})_\as$. All of these facts imply that the choice of map from $A \otimes \Sp(\lambda[m])$ does not effect the overall composition.

Similarly, the two maps arising from the upper right square agree when restricted to $W^{\geq \lambda}_n$. Starting from $A \otimes \Sp(\lambda[m])$, all choices of maps land in this subspace.
\end{proof}

\begin{definition}
Let $V,W,\lambda$ be as above, and let $m \leq n \leq q$ be in the stable range. Then we write $\delta_{m,n,q}: A \otimes \Sp(\lambda[m]) \rightarrow B \otimes \Sp(\lambda[q])$ to denote the equal maps of Lemma \ref{comdi}.
\end{definition}

Our next goal will be to relate the map $\delta_{m,n,q}$ to $r_{n,\lambda}$. Composition in (\ref{comdi}) along the path up-right-right-right-up yields the equality
\[
\delta_{m,n,q} = r_{n,\lambda} \otimes \eta_{m,q}.
\]
On the other hand, composition along the path right-up-right-up-right gives:
\[
\delta_{m,n,q} = \beta_{q,\lambda} \circ (\iota_{n,q})_\ast \circ r_n \circ (\iota_{m,n})_\ast \circ \alpha_{m,\lambda}.
\]
So we have:

\begin{eqnarray}
r_{n,\lambda} \otimes \eta_{m,q} = \beta_{q,\lambda} \circ (\iota_{n,q})_\ast \circ r_n \circ (\iota_{m,n})_\ast \circ \alpha_{m,\lambda}.\label{key}
\end{eqnarray}

We will find that in the case of $\FI$--sets, the right hand side of the above equality is straight forward to compute. This will allow us to give an explicit description of $r_{n,\lambda}$.

Once and for all, fix some $m$ in the stable range along with a vector $x \in \Sp(\lambda[m])$. For any $n \geq m$, choose some integer $q$ such that $m \leq n \leq q$, and pick a linear functional $\psi: \Sp(\lambda[q]) \rightarrow \Q$ such that $\psi((\iota_{n,q})_\ast(x)) = 1$. We further impose the requirement that $\psi$ is equivariant with respect to the action of $\Sn_{q-m}$, thought of as the automorphism group of the set $\{m+1,\ldots,q\}$. Note that this can be done via an averaging trick, because $\Sn_{q-m}$ acts trivially on the image of $\iota_{m,q}$.

Let $\{b_j\}$ be any fixed basis of $B$ and $\{a_i\}$ a fixed basis for $A$. If we write $\{b_j^{\vee}\}$ to denote the dual basis of $\{b_i\}$, then we find that the $(i,j)$-th entry of $r_{n,\lambda}$ with respect to these bases is

\begin{eqnarray}
\langle b_j^\vee \otimes \psi,\ \delta(a_i \otimes x) \rangle= \langle b_j^\vee \otimes \psi,\ \beta_{q,\lambda} \circ (\iota_{n,q})_\ast \circ r_n \circ (\iota_{m,n})_\ast \circ \alpha_{m,\lambda}(a_i \otimes x) \rangle \label{key2}
\end{eqnarray}

Our goal in the next section will be to specialize this setup to $\FI$--modules which arise from linearizations of $\FI$--sets. We will see that in this setting the right hand side of (\ref{key}) is computable enough for us to conclude Theorem \ref{relationthm}.

\subsection{The proof of Theorem \ref{relationthm}}\label{relproof2}

In this section we specialize the setup in the previous section to $\FI$--modules arising from linearizations of $\FI$--sets. Let $X_\dt$ and $Y_\dt$ be  finitely generated $\FI$--sets, and let $R_\dt$ be a relation between these sets. We will write $V_\dt = k X_\dt$, $W_\dt = k Y_\dt$ and we write $r_n$ for the map $V_n \to W_n$ induced by the relation $R_n$. For $x \in X_n$ and $y \in Y_n$ we will write $x \sim y$ to indicate $(x,y) \in R_n$.

Our first reduction will be to limit the total number of stable orbits of our $\FI$--sets. The stable orbits of the relation $R_\dt$ are subsets of products of stable orbits, one from $X_\dt$ and one from $Y_\dt$. It follows that the map $r_n$ will split along such products. Thus, it suffices to understand the map $r_n$ restricted to a chosen pair of orbits. We therefore may  and do assume that $X_\dt$ and $Y_\dt$ have a unique stable orbit.

With this assumption in mind, Theorem \ref{structurethm} implies that for $n \gg 0$,
\[
X_n = M(C)_n, \hspace{.5cm} Y_n = M(D)_n
\]
where $C$ is an $\Sn_r$-set for some $r$ and similarly $D$ is an $\Sn_t$-set for some $t$.
 In particular, we may think of $V_n$ as having a basis of pairs $(K,c)$, where $K$ is an $r$-element subset of $[n]$, and $c \in C$ (see Definition \ref{inducedset} for how the action is defined on this basis). A similar description exists for $W_n$, which will have a basis of pairs $(T,d)$.

\textbf{For the remainder of this section, it will go without saying that $K$ denotes a set of size $r$ and $T$ denotes a subset of size $t$.}\\

We may write $\alpha_{m,\lambda}(a_i \otimes x) = \sum_{(K,c)} \gamma_{K,c} (K,c)$, for some scalars $\gamma_{K,c}$, where $K \subseteq [m]$. Our job will be to compute
\[
\langle b_j^\vee \otimes \psi,\ \beta_{q,\lambda} \circ (\iota_{n,q})_\ast \circ r_n \circ (\iota_{m,n})_\ast(K,c) \rangle .
\]
By definition we have $(\iota_{m,n})_\ast(K,c) = (K,c)$, where $K$ is thought of as a subset of $[n]$, and 
\[
r_n(K,c) = \sum_{(K,c) \sim (T,d), \ T \subset [n]}(T,d).
\]
Thus, we have reduced the problem to needing to compute
\begin{eqnarray}
\left\langle b_j^\vee \otimes \psi,\ \beta_{q,\lambda}{\Big(} \sum_{\substack{ (K,c) \sim (T,d) \\ T \subset [n] \subseteq [q]}} (T,d) {\Big)} \right\rangle = 
\sum_{\substack{ (K,c) \sim (T,d) \\ T \subset [n] \subseteq [q]}} \left\langle b_j^\vee \otimes \psi,\ \beta_{q,\lambda}{\Big(}  (T,d) {\Big)} \right\rangle
.\label{key3}
\end{eqnarray}
We observe that $\psi$ was constructed to be $\Sn_{q-m}$--equivariant, and that $\beta_{q,\lambda}$ is $\Sn_{q}$--equivariant. 
This implies that the summand on the right hand side of~\eqref{key3} is unchanged by the action of $\Sn_{q-m}$ on pairs $(T,d)$. In particular, we may gather together those terms in the sum
according to $S = T \cap [m]$. This yields the expression
\[
\sum_{\substack{S \subseteq [m]\\ d \in D}} \phi_{S,d} \ \#\{T \subseteq [n] \mid (K,c) \sim (T,d) \text{ and } T \cap [m] = S\}
\]
where $\phi_{S,d}$ is some constant. We conclude our proof with the following lemma.

\begin{lemma}
Using the notation of this section, the quantity $|\{T \mid (K,c) \sim (T,d) \text{ and } T \cap [m] = S\}|$ is either equal to 0 for all $n \geq m$ or to $\binom{n-m}{t - |S|}$ for all $n \geq m$. \\
\end{lemma}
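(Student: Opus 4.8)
The plan is to analyze the set $\mathcal{T}_n(S) = \{T \subseteq [n] \mid (K,c) \sim (T,d),\ T \cap [m] = S\}$ by exploiting the $\FI$-invariance of the relation $R_\dt$ together with the structure of $Y_\dt = M(D)_\dt$. First I would fix $K \subseteq [m]$, $c \in C$, $d \in D$, and $S \subseteq [m]$, and observe that, since $S = T \cap [m]$ is required, every $T$ appearing in $\mathcal{T}_n(S)$ contains $S$ and has its remaining $t - |S|$ elements inside $[m+1, n]$. The key dichotomy is whether $\mathcal{T}_m(S)$ is empty: if there is \emph{no} $T \subseteq [m]$ with $T \cap [m] = T = S$ — which forces $|S| = t$ — and $(K,c) \sim (S,d)$, then I claim $\mathcal{T}_n(S) = \emptyset$ for all $n$; otherwise I claim $|\mathcal{T}_n(S)| = \binom{n-m}{t-|S|}$.

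The crux is a transitivity/homogeneity argument. Suppose $\mathcal{T}_n(S)$ is nonempty, witnessed by some $T_0$ with $T_0 \cap [m] = S$ and $(K,c) \sim (T_0,d)$. Pick any other $t$-subset $T \subseteq [n]$ with $T \cap [m] = S$. Then $T$ and $T_0$ differ only in their portions lying in $[m+1,n]$, both of which are $(t-|S|)$-subsets of $[m+1,n]$. Choose a permutation $\sigma \in \Sn_n$ fixing $[m]$ pointwise (hence fixing $K$, and acting as the identity on the relevant copy of $C$ and on $S$) and carrying $T_0 \setminus S$ to $T \setminus S$; such $\sigma$ exists because $\Sn_{[m+1,n]}$ acts transitively on $(t-|S|)$-subsets of $[m+1,n]$. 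Since $\sigma$ fixes $[m]$ pointwise, $\sigma_\ast(K,c) = (K,c)$ and $\sigma_\ast(T_0,d) = (\sigma(T_0), d') = (T, d)$ — here one must check that $d$ is genuinely unchanged, which follows from Definition~\ref{inducedset}: because $\sigma$ fixes $[m] \supseteq S$ pointwise and moves only elements of $[m+1,n]$, the canonical factorization $\sigma \circ f = h \circ \tau$ of the injection $f$ encoding $(T_0,d)$ has $\tau$ acting trivially on the part of $d$'s index set lying over $[m]$; more carefully, since $d \in D$ and $D$ is an $\Sn_t$-set, one verifies the induced permutation of $d$ is trivial because $T_0$ and $T$ share their initial segment $S$ and $\sigma$ preserves the ordering of the complementary elements. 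By $\FI$-invariance of $R$, $\sigma_\ast$ preserves $R_n$, so $(K,c) \sim (T,d)$, i.e. $T \in \mathcal{T}_n(S)$. Thus either $\mathcal{T}_n(S)$ is empty or it contains \emph{every} $t$-subset $T$ of $[n]$ with $T \cap [m] = S$, and the number of such subsets is exactly $\binom{n-m}{t-|S|}$, the number of ways to choose the $t - |S|$ elements of $T$ outside $[m]$.

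Finally I would reconcile the two regimes across different $n$: if $\mathcal{T}_{n_0}(S) \neq \emptyset$ for some $n_0 \geq m$, then by applying $(\iota_{n_0,n})_\ast$ — which preserves $R$ and sends $(T,d)$ to $(T,d)$ for $T \subseteq [n_0]$ — we get $\mathcal{T}_n(S) \neq \emptyset$ for all $n \geq n_0$, and by the homogeneity just established we may descend: if $\mathcal{T}_n(S) \neq \emptyset$ for large $n$, pick $T \in \mathcal{T}_n(S)$, note $T \cap [m] = S$ has size $\leq t$, and transport $T$ by a permutation fixing $[m]$ to a subset inside $[n_0]$ whenever $n_0 \geq m + (t - |S|)$, showing $\mathcal{T}_{n_0}(S) \neq \emptyset$. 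Hence for all $n \geq m$ the count is either uniformly $0$ or uniformly $\binom{n-m}{t-|S|}$ (with the convention $\binom{n-m}{t-|S|} = 0$ when $n - m < t - |S|$, which is consistent since no valid $T$ exists in that range anyway). The main obstacle I anticipate is the bookkeeping in verifying that the $D$-component is genuinely fixed by a permutation that fixes $[m]$ pointwise — this requires unwinding the canonical factorization in Definition~\ref{inducedset} carefully, since the relation $(K,c) \sim (T,d)$ is only assumed $\FI$-invariant as a subset of $M(C)_\dt \times M(D)_\dt$, not in any more refined sense.
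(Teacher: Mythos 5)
Your homogeneity argument — that if $\mathcal{T}_n(S)$ is nonempty then it contains every $t$-subset $T$ of $[n]$ with $T \cap [m] = S$ — is correct and is exactly the paper's key step. You are in fact \emph{more} careful than the paper in checking that the $D$-component is genuinely fixed: you correctly observe that the permutation $\sigma$ must be chosen so that the canonical factorization of $\sigma \circ f$ has trivial $\Sn_t$-part, which forces $\sigma$ to carry $T_0\setminus S$ to $T\setminus S$ in an order-preserving way. This is a real subtlety that the paper elides, and your treatment of it is a genuine improvement.

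However, your reconciliation across degrees has a gap in the descent direction, and it is precisely the gap that the paper's appeal to finite generation is designed to close. Suppose $\mathcal{T}_n(S) \neq \emptyset$ and you transport $T$ by a permutation in $\Sn_n$ fixing $[m]$ pointwise to some $T' \subseteq [n_0]$. This establishes $(K,c,T',d) \in R_n$, with all coordinates lying in the image of $\iota_{n_0,n}$ — but it does \emph{not} establish $(K,c,T',d) \in R_{n_0}$. The containment $R_\dt \subseteq (X\times Y)_\dt$ is only an $\FI$-subset, so transition maps send $R_{n_0}$ into $R_n$; the converse (that an element of $R_n$ whose data lies in $[n_0]$ comes from $R_{n_0}$) is false in general. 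Concretely, Example~\ref{ex:kneseredgeorbits} in the paper allows a relation to appear only from some degree $a_l$ onward, so one can have $(K,c)\sim_n(T',d)$ for $n \geq a_l$ while $(K,c)\not\sim_{n_0}(T',d)$ for $m \leq n_0 < a_l$. This is why the paper explicitly invokes the finite generation of $R_\dt$ and the stable range assumption on $n$: by Proposition~\ref{EventuallyBijective} applied to $R_\dt$, every $\Sn_{n+1}$-orbit of $R_{n+1}$ meets the image of $R_n$, and one then needs a short permutation argument (of the same flavor as your homogeneity step) to move the witness off of $n+1$ and back into $[n]$. Your proposal, as written, never uses finite generation of $R_\dt$ and so cannot justify the descent; you also flag the $D$-component bookkeeping as the ``main obstacle,'' when in fact that part is fine and the descent is where the argument genuinely requires the stability hypothesis.
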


\begin{proof}
The key observation which will allow us to prove the lemma is the following. If there is some $(T,d)$ with $(K,c) \sim (T,d)$ and $T \cap [m] = S$, then \emph{every} choice of $T' \subseteq [n]$ with $T' \cap [m] = S$ and $|T'| = t$ has $(K,c) \sim (T,d)$. Indeed, one may find a permutation $\sigma \in \Sn_{n-m}$ which maps $T$ to $T'$, and has $\sigma(T,d) = (T',d)$. Moreover, because $\sigma$ fixed $[m]$ we must have $\sigma(K,c) = (K,c)$. Thus, because our relation is equivariant under the action of the symmetric group, we conclude that $(K,c) \sim (T',d)$. This implies that the set in question either has size zero or $\binom{n-m}{t-|S|}$.

To conclude the proof, we must show that for $n \gg 0$ there exists some set $T' \subseteq [n+1]$ with $|T'| = t, (K,c) \sim (T',d),$ and $T' \cap [m] = S$ if and only if there exists some $T \subseteq [n]$ with the same properties. Indeed, this follows from the fact that the relation $R_\dt$ is finitely generated and we have taken $n$ to be in the stable range.
\end{proof}

\subsection{Applications of Theorem \ref{relationthm}}
\label{mainapp}
In this section, we consider applications of Theorem \ref{relationthm}. In particular, we prove Corollary \ref{maincor}, and apply it to various cases.

\begin{proof}[The proof of Corollary \ref{maincor}]
Let $X_\dt$ be an element-stable $\FI$--set, and let $R_\dt$ be a self relation. For any partition $\lambda$ we write $r_{n,\lambda}$ for the restriction of $r_n$ to the $\lambda[n]$--isotypic piece of $\Q X_n$. Then, by Theorem \ref{relationthm}, there exists a choice of bases such that for all $n \gg 0$ the maps $r_{n,\lambda}$ take the form,
\[
\begin{pmatrix}
A_\lambda(n)& 0& 0 &\ldots & 0& 0\\
0 & A_\lambda(n) & 0 &\ldots & 0 &0\\
\vdots\\
0 & 0 & 0 &\ldots & 0 & A_{\lambda}(n)
\end{pmatrix}
\]
where $A_{\lambda}(n)$ is a square matrix of fixed (non-varying in $n$) dimension with entries in $\Q[n]$, and the total number of blocks is precisely $\dim_\Q \Sp(\lambda[n])$. Moreover,  representation stability theory \cite{CEF} implies that the total number of non-zero $r_{n,\lambda}$ is a constant independent of $n$. Therefore, to understand the eigenvalues of $r_{n}$ it suffices to understand the eigenvalues of $A_\lambda(n)$.

We may think of $A_\lambda(n)$ as being a matrix over the field $\Q(n)$. With this perspective, it becomes clear that we may factorize the characteristic polynomial of $A_\lambda(n)$, over some algebraic extension of $\Q(n)$, as
\[
\chi_{\lambda}(n,x) = \prod_i (x-f_i(n))^{e_i}
\]
where $e_i \geq 1$ are some integers, and $f_i(n)$ is some function which is algebraic over $\Q(n)$. This allows us to deduce that the eigenvalues of $r_n$ are algebraic functions over $\Q(n)$ (in fact, they are integral over $\Q[n]$), as desired.

We next must argue that the functions $f_i(n)$, with $i$ varying, only agree for finitely many values of $n$. In other words, if $f(n)$ and $g(n)$ are distinct algebra functions, we must argue that $f(n)=g(n)$ for only finitely many $n$. Indeed, let $P(z,n)$ be the minimal polynomial of minimal $z$ degree with $P(f(n), n) = P(g(n), n) = 0$. 
By the minimality of the degree of $P$, the polynomial $P$ is squarefree as a polynomial in $z$, so the discriminant $\Delta(n)$ of $P$ with respect to $z$ is a nonzero polynomial in $n$. For any $n$ which is not a root of $\Delta(n)$, the roots of $P(z,n)=0$ are distinct, so $f(n) \neq g(n)$ for such an $n$. We note for future use that we have proved that, even if we allow $n$ to take real, non-integer values, there are only finitely many $n$ for which $f(n) = g(n)$.


To conclude, we must show that the multiplicities of these distinct eigenvalues are equal to polynomials in $n$. This follows from the fact that the eigenvalues of each $A_{\lambda}(n)$ have constant multiplicity, while the total number of $A_{\lambda}(n)$ which appear in the above matrix is precisely $\dim_\Q S(\lambda)_n$, a polynomial in $n$.
\end{proof}

Once again calling upon the example of $\FI$--graphs with the edge relation, we see that Corollary \ref{maincor} implies Theorem H of \cite{RW}. Recall that for a graph $G$ the Laplacian of $G$ is the matrix $D-A_n$, where $D$ is the diagonal matrix of degrees of vertices of $G$, and $A_n$ is the adjacency matrix.\\

\begin{theorem}[Theorem H of \cite{RW}]\label{rwthm}
Let $G_\dt$ be a vertex-stable $\FI$--graph, and let $r_n$ denote either the adjacency matrix or Laplacian of $G_n$. We may write the distinct eigenvalues of $r_n$ as,
\[
\lambda_1(n) < \lambda_2(n) < \ldots < \lambda_{l(n)}(n),
\]
for some function $l(n)$. Then for all $n \gg 0$
\begin{enumerate}
\item $l(n) = l$ is constant. In particular, the number of distinct eigenvalues of $r_n$ is eventually independent of $n$;
\item for any $i$ the function
\[
n \mapsto \lambda_i(n)
\]
agrees with a function which is algebraic over $\Q(n)$.
\item for any $i$ the function
\[
n \mapsto \text{ the multiplicity of $\lambda_i(n)$}
\]
agrees with a polynomial.
\end{enumerate}
\end{theorem}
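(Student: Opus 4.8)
The plan is to deduce Theorem~\ref{rwthm} directly from Corollary~\ref{maincor}, handling the adjacency operator as an immediate special case and reducing the Laplacian to it. For the adjacency matrix, note that $A_n$ is exactly the map $r_n \colon \Q V(G_n) \to \Q V(G_n)$ associated to the edge relation $E(G_\dt) \subseteq V(G_\dt) \times V(G_\dt)$, which is a self-relation of the finitely generated $\FI$--set $V(G_\dt)$. So Corollary~\ref{maincor} applies verbatim: for $n \gg 0$ there are exactly $N$ distinct eigenvalues, listed by functions $f_1(n), \dots, f_N(n)$ algebraic over $\Q(n)$, with polynomial multiplicities. The only thing left is to pass from this unordered list to the ordered list $\lambda_1(n) < \dots < \lambda_{l(n)}(n)$ of Theorem~\ref{rwthm}. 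Since $G_\dt$ is an undirected $\FI$--graph, $A_n$ is a real symmetric matrix, so each $f_i(n)$ is real for large integers $n$; and for $i \neq j$ the difference $f_i - f_j$ is a nonzero algebraic function, hence (by the argument already used in the proof of Corollary~\ref{maincor}, together with the fact that a nonzero algebraic function taking real values is asymptotic to $c\,n^{a/b}$ with $c \in \R\setminus\{0\}$ and $a/b \in \Q$) it has eventually constant sign. Therefore the relative order of $f_1(n),\dots,f_N(n)$ stabilizes for $n \gg 0$; relabelling so that $f_1(n) < \dots < f_N(n)$ and setting $l = N$, $\lambda_i = f_i$ gives parts (1)--(3) for the adjacency matrix.

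For the Laplacian $L_n = D_n - A_n$, the one extra input is that the degree function is eventually polynomial: for $n \gg 0$, all vertices in a fixed stable orbit of $V(G_\dt)$ have a common degree $\deg_i(n)$, and $n \mapsto \deg_i(n)$ agrees with a polynomial. This follows from the counting lemma at the end of Section~\ref{relproof2}, since the degree of a vertex $(K,c)$ equals $\sum_{S,d}\#\{T : (K,c)\sim(T,d),\ T\cap[m]=S\}$, a finite sum of terms each equal to $0$ or $\binom{n-m}{t-|S|}$ (it is also among the results of \cite{RW}). Now use Theorem~\ref{structurethm}: for $n \gg 0$, $\Q V(G_n)$ splits as a direct sum of orbit-pieces on each of which the diagonal matrix $D_n$ acts by the scalar $\deg_i(n)$. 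Hence $D_n$ is $\Sn_n$--equivariant, it preserves each $\lambda[n]$--isotypic component, and in a basis of $\Q V(G_n)_{\lambda[n]}$ compatible with the orbit decomposition it is a block-scalar matrix with the polynomials $\deg_i(n)$ on the blocks. Thus the restriction of $L_n$ to the $\lambda[n]$--isotypic component has the same structure used in the proof of Corollary~\ref{maincor} --- a fixed-size block $D_{n,\lambda} - r^{A}_{n,\lambda}$, with entries polynomial in $n$ by Theorem~\ref{relationthm} applied to the edge relation, repeated $\dim_\Q \Sp(\lambda[n])$ times. The argument of Corollary~\ref{maincor} therefore applies word for word, and, together with the real symmetry of $L_n$, the ordering argument of the previous paragraph yields parts (1)--(3) for the Laplacian as well.

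Essentially all of the content lives in Corollary~\ref{maincor}, hence in Theorem~\ref{relationthm}; the work specific to Theorem~\ref{rwthm} is light. I expect the only genuinely delicate points to be (i) the eventual polynomiality of the degree functions, needed to reduce the Laplacian to the adjacency case but an easy consequence of the counting lemma of Section~\ref{relproof2}, and (ii) the upgrade from an unordered list of algebraic eigenvalue functions to the ordered list $\lambda_1 < \dots < \lambda_l$, which relies on real symmetry of the matrices and on the ``distinct algebraic functions cross only finitely often'' observation already recorded inside the proof of Corollary~\ref{maincor}. Neither requires any new machinery.
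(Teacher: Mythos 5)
Your proposal is correct and follows essentially the same route as the paper: apply Corollary~\ref{maincor} to the edge relation, use real symmetry of $r_n$ together with the ``distinct algebraic functions cross only finitely often'' observation to order the eigenvalues, and reduce the Laplacian case to the adjacency case via the eventual polynomiality of vertex degrees. You simply fill in a bit more of the bookkeeping (deriving degree polynomiality from the counting lemma, and spelling out that $D_n$ acts by the scalars $\deg_i(n)$ on each orbit's isotypic piece so that $L_{n,\lambda}=D_{n,\lambda}-r^A_{n,\lambda}$ still has polynomial entries) where the paper just cites \cite{RW} and says the Laplacian is a $\Q[n]$-linear combination of relations.
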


\begin{proof}
To make sense of the theorem statement, note that in this case $r_n$ is symmetric, whence our matrix must have real eigenvalues. In particular, the functions $f_i(n)$ from the  proof of Corollary \ref{maincor} must be real-valued. Because we know that the $f_i(n)$ evaluate to distinct real numbers for $n \gg 0$, it follows that we may order them using the usual order on $\R$. All of this put together imply Theorem H of \cite{RW} for the adjacency matrix.

To prove this statement for the Laplacian matrix, we note that it is shown in \cite{RW} that finitely generated $\FI$--graphs have vertex degrees which agree with polynomials in $n$ for $n \gg 0$. Therefore, the Laplacian can be expressed as a $\Q[n]$--linear combination of relations. It follows that the Laplacian will satisfy the conclusion of Theorem \ref{relationthm}, and therefore will also satisfy the conclusions of Corollary \ref{maincor}
\end{proof}

\end{document}